\newtheorem{thm}{Theorem}[section]
\newtheorem{prop}[thm]{Proposition}
\newtheorem{lemma}[thm]{Lemma}
\theoremstyle{definition}
\newtheorem{defin}[thm]{Definition}
\newtheorem{que}{Question}
\newcommand\power{\mathop{\mathscr P}}
\newcommand\X{\mathscr X}
\newcommand\sem[1]{\llbracket #1\rrbracket}
\newcommand\dom{\text{dom}}
\newcommand\FV{\mathop\text{FV}}
\newcommand\henkin[2]{ {\displaystyle #1 \atop \displaystyle #2} }
\newcommand\fd[2]{\left[ #1 \mathord\rightarrow #2 \right]}
\newcommand\mvd[2]{\left[ #1 \mathord\twoheadrightarrow #2 \right]}
\newcommand\da[2]{D(#1,#2)}
\renewcommand\H{\mathop{\mathcal H}}
\renewcommand\P{\mathop{\mathcal P}}
\renewcommand\L{\mathop{\mathcal L}}
\newcommand{\Br}{{\mathop{\mathrm{Br}}}}
\renewcommand\models\vDash
\newcommand\nmodels\nvDash
\newcommand\imp\rightarrow
\newcommand\ekv\leftrightarrow
\newcommand\restrictto{\mathord\upharpoonright}
\newcommand\SBL{\mathrm{SBL}}
\title{Generalized quantifiers in Dependence logic}
\author{Fredrik Engstr\"om}
\date{\today}
\address{Department of Philosophy, Linguistics and Theory of Science \\ University of Gothenburg\\ 
 Box 200, 405 30 G\"oteborg, Sweden}
\email{fredrik.engstrom@gu.se}
\thanks{The author was partially supported by the EUROCORE LogICCC LINT 
	program and the Swedish Research Council. \\ The author would like to thank 
	the anonymous referee who in several ways improved this paper.\\ This paper 
	will we published in Journal of Logic, Language and Information. The final 
	publication is available at springerlink.com. DOI: 
	10.1007/s10849-012-9162-4}
\begin{document}

\begin{abstract}
We introduce generalized quantifiers, as defined in Tarskian semantics
by Mostowski and Lindstr\"om, in logics whose semantics is based on teams instead of assignments,
e.g., IF-logic and Dependence logic. Both the monotone and the non-monotone case is considered.

It is argued that 
to handle quantifier scope dependencies of generalized quantifiers in a 
satisfying way the dependence atom in Dependence logic is not well suited and 
that the \emph{multivalued dependence atom} is a better choice.  This atom is 
in fact definably equivalent to the \emph{independence atom} recently 
introduced by V\"a\"an\"anen and Gr\"adel.
\end{abstract}

\keywords{Dependence logic, Independence Friendly logic, Generalized quantifiers, Multivalued dependence}

\maketitle

\section{Introduction}

Dependencies appear in many guises in both formal and natural languages.  
Several logical systems have been constructed bringing such quantifer scope 
dependencies to the forefront of the syntactical construction, but none of 
these handle generalized quantifiers, one of the basic tools in logic, 
descriptive complexity theory, and formal linguistics.  The purpose of this 
paper is to introduce generalized quantifiers in these logical frameworks in 
such a way that branching, i.e. non linearity, of generalized quantifiers can 
be handled naturally in the logic itself.

Dependence logic, proposed by V\"a\"an\"anen \cite{Vaananen:2007}, is an 
elegant way of introducing dependencies between variables into the object 
language. It can also deal with branching of existential and universal 
quantifiers, but so far it cannot handle generalized quantifiers.  In this 
paper we present a way of extending Dependence logic with generalized 
quantifiers.

\subsection{Generalized quantifiers and natural languages}

When giving (parts of written) natural languages, such as English, a formal 
model theoretic semantics, such as in \cite{Montague:1970}, several problems 
naturally surface. One is how
to treat \emph{determiners} such as {\tt all}, {\tt some} and {\tt
most}. It turns out that Mostowski's and Lindstr\"om's (see for example
\cite{Mostowski:1957} and \cite{Lindstrom:1966}) notions of generalized
quantifiers are most useful when formalizing expressions with determiners, see
\cite{Stanley:2006} for a thorough account of
this.

According to Mostowski and Lindstr\"om a quantifier of type $\langle 
n_1,\ldots,n_k\rangle$, where $n_i$ are positive natural numbers,
is a class (in most cases a \emph{proper class}) of
structures in the finite relational signature $\set{R_1,\ldots,R_k}$ where 
$R_i$ is of arity $n_i$, closed under taking isomorphic images.
For example, the meaning of the determiner \emph{most} is commonly the type 
$\langle 1,1\rangle$ quantifier $$\text{\tt most} = \set{(M,A,B) : |A \cap B| 
	\geq |A \setminus B|}.$$
Thus, a possible formalization of the proposition ``most boys are tall'' is
$$\mathtt{most}\, x,y \ (Bx, Ty)$$
where $B$ is the predicate of being a boy and $T$ that one of being
tall.  The truth condition for this proposition is then 
\begin{multline*}
(M,B,T) \models \mathtt{most}\, x,y \ (Bx, Ty) \ \text{ iff } \ (M,B,T) \in
\mathtt{most}\ \text{ iff }  \ |B \cap T| \geq |B \setminus T|,
\end{multline*}

which seems to coincide with the intuitive truth condition for the 
proposition. Note that we are subscribing to the sloppy style of not 
distinguishing between the predicate symbols and the predicates, e.g., in the 
above truth condition $B$ stands for both the predicate symbol in the formula 
and a subset of the domain.

Given a generalized quantifier $Q$ of type $\langle n_1,\ldots,n_k\rangle$ and 
a domain $M$, let the \emph{local} quantifer $Q_M$ be defined as
$$Q_M = \set{\langle A_0,A_1,\ldots,A_k \rangle \subseteq M^{n_1}\times \ldots \times M^{n_k}|  (M,A_0,A_1,\ldots,A_k) \in Q}.$$
Observe that local quantifiers are just sets of relations over the domain $M$, they are not generalized quantifiers in the strict sense. Generalized quantifiers in the strict sense we sometimes call \emph{global} when need is to distinguish them from \emph{local} quantifiers.

\subsection{Dependence and independence in natural languages}

In  \cite{Hintikka:1974} Hintikka claims that the proposition
\begin{quote}
\strut\llap{($\ast$)\ \ }\it Some relative of each villager and some relative of each townsmen hate
each other. 
\end{quote} 
ought to be interpreted as 
$$\henkin{\forall x \exists y}{\forall z \exists w} A(x,y,z,w)$$
where $A(x,y,z,w)$ is the quantifier free formula expressing that if $x$
is a villager and $z$ is a townsman then $y$ is a relative of $x$, $w$ is a
relative of $z$, and $y$ and $w$ hate each other, and 
$\henkin{\forall x \exists y}{\forall z \exists w}$ is the partially
ordered quantifier studied by Henkin in \cite{Henkin:1961}, whose semantics 
is easiest expressed by its skolemization:
$$\exists f,g \forall x,z \ A(x,f(x),z,g(z)),$$
thus $y=f(x)$ may only depend on the value of $x$ and $w=g(z)$ only on $z$.

However this interpretation of ($\ast$) in terms of the branching Henkin quantifier
has been strongly objected to (see for example Barwise \cite{Barwise:1979} and Gierasimczuk and Szymanik \cite{Gierasimczuk:2009}) and other more natural
examples of branching have been given, such as Barwise's example from \cite{Barwise:1979}
\begin{quote}
\strut\llap{($\dagger$)\ \ }\it Most of the dots and most of the stars are all connected by lines. 
\end{quote}
It should be rather clear, we think, that one natural reading of this is that there
is a set of stars $A$ which includes most stars, and a set $B$ of dots
including most dots, such that each star in $A$ is connected to
each dot in $B$. That is the branching reading of the sentence. Branching here means that the choice of the set of stars may not depend on the choice of any particular dot in the earlier chosen set of dots. 

It seems hard to find natural examples in natural languages of
branching involving only the first order quantifiers $\exists$ and $\forall$. Examples involving generalized quantifiers as in ($\dagger$) above is easier to find. 
Another example of when branching reading is natural is with numerical quantifiers as in the following example from Davies \cite{Davies:1989}.
\begin{quote}
\strut\llap{($+$)\ \ }\it
Two examiners marked six scripts.
\end{quote}
Maybe the most natural reading of ($+$) is
$$\henkin{\exists^{=2} x}{\exists^{=6} y}\bigl( E(x) \land S(y) \land M(x,y)\bigr),$$
where $E$ is the predicate of being an examiner, $S$ that of being a script, and $M(x,y)$ the relation of $x$ marking $y$.
The numerical quantifiers $\exists^{=k}$ are, even though definable in first order logic,  proper generalized quantifiers.

To be able to handle branching readings of sentences like ($\ast$) in a coherent logical framework Hintikka developed
Independence Friendly logic, or IF-logic for short, in which statements of 
the form ``there exists $x$, chosen independently of $\bar y$, such that'' 
can be expressed by the formal construction
$$\exists x / \bar y \ A(x,\bar y).$$
Here $\bar y$ is a finite sequence of variable $y_0,y_1,\ldots,y_{n-1}$.
We say that $\exists x / \bar y$ is a \emph{slashed} quantifier.
However IF-logic, as it stands, cannot handle generalized quantifiers, the 
chief
example of branching in natural languages. This paper introduces generalized quantifiers in
IF-logic, and many of its variants such as Dependence Friendly logic (DF-logic) and Dependence logic.

Barwise (see \cite{Barwise:1979}), among others, argues that for monotone\footnote{A quantifier $Q$ is monotone if given $A \subseteq B \subseteq M$ such that $(M,A) \in Q$ then $(M,B) \in Q$.} quantifiers $Q_1$ and $Q_2$ of type
$\langle 1 \rangle$ the branching of $Q_1$ and
$Q_2$
$$\henkin{Q_1x}{Q_2y}A(x,y)$$
should be interpreted as
$$\Br(Q_1,Q_2)xy \ A(x,y),$$
where $\Br(Q_1,Q_2)$ is the type $\langle 2 \rangle$ quantifier
$$\set{(M,R) | \exists A \in Q_1, B \in Q_2, A \times B \subseteq
R}.$$
We take this as the definition of branching of two monotone quantifiers.
The correctness of that definition seems to be rather universally agreed upon.
Thus, our definition of quantifiers in DF-logic should reflect upon this.

It could be worth noting that for \emph{monotone} quantifiers $Q_1$ and $Q_2$ a formula $Q_1x\,Q_2y\, \varphi$ can be translated into existiential second-order logic with $Q_1$ and $Q_2$ used as second-order predicates in the following way:
$$
\exists X \bigl( Q_1(X) \land \forall x \mathord\in X \exists Y \bigl(Q_2(Y) \land \forall y \mathord\in Y \varphi\bigr)\bigr).
$$
In this formula it is clear that the \emph{second-order} variable $Y$ depends on the \emph{first-order} variable $x$. By moving the $\exists Y$ outside of the scope of $\forall x \in X$ we can break this dependence. The resulting formula then becomes:
$$
\exists X \exists Y \bigl( Q_1(X) \land Q_2(Y) \land \forall x \mathord\in X \forall y \mathord\in Y \varphi\bigr)\bigr),
$$
which is equivalent to the branching reading:
$
\Br(Q_1,Q_2)xy \ \varphi,
$
giving some evidence on the correctness of the definition of $\Br(Q_1,Q_2)$.

In the next section we will define both IF-logic and DF-logic, but first take a look at another variant of IF-logic 
developed by V\"a\"an\"anen \cite{Vaananen:2007} called \emph{Dependence logic}.

\subsection{Dependence logic and related logics}

The syntax of Dependence logic is  that of first order logic
together with new atoms, the dependence atoms. There is one
dependence atom for each arity  written
$\fd{t_1,\ldots,t_n}{t_{n+1}}$.\footnote{When V\"a\"an\"anen introduced Dependence logic he used the notation  $\mathord=(t_1,\ldots,,t_n,t_{n+1})$ for $\fd{t_1,\ldots,t_n}{t_{n+1}}$, however we prefer the latter notation.} For
simplicity we will assume that all formulas are written in negation
normal form, i.e., all negation signs occuring in a formula occur in
front of an atomic formula. This is to make some technicalities easier, the 
downside of this approach is that negation cannot be treated in a 
compositional way. More on this later. Note also that negation in Dependence 
logic is not contradictory negation as; for example, we will see later that
$\nmodels \forall x,y (\fd{x}{y} \lor \lnot \fd{x}{y})$.

To define a compositional semantics for Dependence logic we need to consider
\emph{sets of assignments} called \emph{teams}. Formally, an assignment
is a function $s: V \to M$ where $V$ is a finite set of variables and
$M$ is the domain under discussion. A team (on the domain $M$) is a set of assignments of some fixed finite set of variables $V$, i.e., a subset of $\set{s| s : V \to M}$ for some finite set of variables $V$. If $V=\emptyset$ there is only one assignment $V \to M$, the empty assignment, denoted by $\epsilon$. Please observe that the team of the empty assignment $\set{\epsilon}$ is different from the empty team.

Given an assignment $s: V \to M$ and $a \in M$ let $s[a/x]: V \cup \set{x} \to M$ be the assignment:
$$
s[a/x]: y \mapsto  
\begin{cases}
s(y)  &\text{ if $y \in V \setminus \set{x}$, and}\\
a  &\text{ if $x=y$.}
\end{cases}
$$

The domain of a (non-empty) team
$\dom(X)$ is the set of variables $V$.
The condition $M ,X \models  \varphi$ means that the formula $\varphi$ of 
Dependence logic
is satisfied in the structure $M$ by the \emph{team} $X$. We use the notation $M,s \models\varphi$ for ordinary Tarskian satisfaction of the first order formula $\varphi$ under the assignment $s$. 
We call this type of semantics where a formula is satisfied by a team, not just a single assignment, Hodges semantics\footnote{Hodges in \cite{Hodges:1997} invented this framework in order to give IF-logic a compositional semantics.} to distinguish it from ordinary Tarskian semantics.

The truth
conditions for $M ,X \models  \varphi$ are the following:

\begin{align*}
 M ,X \models  R(\bar t) &\text{ iff } \forall s \in X: M,s \models R(\bar t) \\
 M ,X \models  \lnot R(\bar t) &\text{ iff } \forall s \in X : M,s
\models \lnot R(\bar t) \\
M ,X \models  \fd{t_1,\ldots,t_n}{t_{n+1}} &\text{ iff }
\forall s,s' \in X 
\\ &\hskip 1cm \bigwedge_{1\leq i \leq n}
t_i^{M,s}
=t_i^{M,s'}  \rightarrow  t_{n+1}^{M,s} =t_{n+1}^{M,s'}\\
M ,X\models \lnot \fd{t_1,\ldots,t_n}{t_{n+1}} &\text{ iff } X =
\emptyset \\
M ,X \models  \varphi \land \psi &\text{ iff }M ,X \models 
\varphi \text{ and } M ,X \models  \psi \\
M ,X \models  \varphi \lor \psi &\text{ iff } \exists Y \cup Z = X: M
,Y \models  \varphi \text{ and } M,Z \models \psi \\
M ,X \models  \exists y \varphi &\text{ iff } \exists f: X \to M, \text{ such that } M,{X[f/y]}
\models \varphi \\
M ,X \models  \forall y \varphi &\text{ iff }  M,X[M/y]
\models \varphi.
\end{align*}
Here $t^{M,s}$ is the interpretation of the term $t$ in the model $M$ under the assignment $s$, $$X[M/y] \text{ is the team } \set{s[a/y] | s \in X, a \in M}$$ of
assignments, and when ever $f: X \to M$, $X[f/y] \text{ is }\set{s[f(s)/y] | s \in X}.$
Observe that for some teams $X$ we have $M,X \nmodels \fd{x}{y}$ and $M,X 
\nmodels \lnot \fd{x}{y}$. In fact this is the case when $M$ has at least two 
elements and $X$ is the full team of all assignments of $x$ and $y$. 
Therefore, $\nmodels \forall x,y (\fd{x}{y} \lor \lnot \fd{x}{y})$.
This illustrates that negation is not  contradictory negation.

The free variables of a formula is defined in a recursively way, like in first 
order logic, with the extra base case of the dependence atom: all the 
variables in $\bar x$ and $y$ are free in the formula $\fd{\bar x}{y}$. Let 
$\FV(\varphi)$ be the set of free variables of  $\varphi$. A sentence is a 
formula without free variables.
We define $M \models \sigma$ for a sentence $\sigma$ to hold if $M , {\set{\epsilon}} \models \sigma$.

By just staring at the definition of satisfaction we can make some remarks. 
First, every formula is satisfied by the empty team, which has as a 
consequence that for any atomic formula $\varphi$ we have both $M,\emptyset 
\models \varphi$ and $M,\emptyset \models \lnot \varphi$. Second, satisfaction 
is preserved under taking subteams:

\begin{prop}
If $M ,X \models  \varphi$ and $Y \subseteq X$ then $M ,Y \models  \varphi$. 
\end{prop}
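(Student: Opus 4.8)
The plan is to prove the statement by induction on the structure of the formula $\varphi$, following the clauses of the satisfaction relation one by one. For the atomic cases $R(\bar t)$ and $\lnot R(\bar t)$ the condition $M,X \models \varphi$ is a universal statement over assignments $s \in X$; since every $s \in Y$ is also in $X$, the same condition holds over $Y$, giving $M,Y \models \varphi$. The dependence atom $\fd{t_1,\ldots,t_n}{t_{n+1}}$ is handled identically: its satisfaction is a universal statement over pairs $s,s' \in X$, and the pairs drawn from $Y$ form a subclass of those drawn from $X$. For the negated dependence atom, $M,X \models \lnot\fd{t_1,\ldots,t_n}{t_{n+1}}$ forces $X = \emptyset$, whence $Y = \emptyset$ and the condition holds of $Y$ as well. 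The conjunction case is immediate from the induction hypothesis applied to both conjuncts.

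The two cases that require genuine care are the disjunction and the quantifiers, since there the witnesses (a cover of the team, or a choice function) must be adapted to the subteam. For $\varphi \lor \psi$, suppose $M,X \models \varphi \lor \psi$ is witnessed by $Y' \cup Z' = X$ with $M,Y' \models \varphi$ and $M,Z' \models \psi$. Given $Y \subseteq X$ I would set $Y'' = Y' \cap Y$ and $Z'' = Z' \cap Y$; then $Y'' \cup Z'' = (Y' \cup Z') \cap Y = Y$, and since $Y'' \subseteq Y'$ and $Z'' \subseteq Z'$ the induction hypothesis yields $M,Y'' \models \varphi$ and $M,Z'' \models \psi$, so $M,Y \models \varphi \lor \psi$.

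For $\exists y\,\varphi$, a witnessing $f : X \to M$ with $M,X[f/y] \models \varphi$ restricts to $f \restrictto Y : Y \to M$, and one checks directly from the definitions that $Y[f\restrictto Y/y] \subseteq X[f/y]$; applying the induction hypothesis to this subteam gives $M,Y \models \exists y\,\varphi$. The universal case is the same but simpler: from $Y \subseteq X$ it follows at once that $Y[M/y] \subseteq X[M/y]$, and the induction hypothesis finishes it. I expect the disjunction clause to be the only genuinely delicate step, precisely because a cover of $X$ is not inherited by $Y$ automatically but must be cut down by intersection; every other clause either propagates the subteam relation through the team-modification operations $X[f/y]$ and $X[M/y]$, or reduces to the monotonicity of a universally quantified condition ranging over the assignments of a team.
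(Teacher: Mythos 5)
Your proof is correct, and it is exactly the routine structural induction that the paper leaves implicit: the paper states this proposition without proof, as an observation made ``by just staring at the definition of satisfaction,'' and your case analysis (universal conditions for the atoms, intersecting the cover with $Y$ for disjunction, restricting the witnessing function for $\exists$) is the standard argument that justifies that remark. The only point worth making explicit is that the induction hypothesis must be quantified over \emph{all} pairs of teams $Y \subseteq X$, since in the disjunction and quantifier cases you apply it to teams other than the original ones --- which your write-up does implicitly and correctly.
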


The next proposition might seem a bit ad hoc at first sight, but its role will later be apparent. It 
 tells us that the truth condition for the existential quantifier is equivalent to the truth condition we later introduce for generalized quantifiers.

\begin{prop}
$M ,X \models  \exists x\varphi$ iff there exists $F: X \to \exists_M$
such that $M,{X[F/x]}  \models\varphi$, where $X[F/x]$ is the team
$\set{s[a/x]| s\in X, a\in F(s)}$.
\end{prop}

Recall that $\exists_M$ is the local existential quantifier, i.e., the set of non-empty predicates on $M$: $\set{A \subseteq M | A \neq \emptyset}$.

Naturally, the semantic value of a formula in Dependence logic is
 the set of teams satisfying the formula. 
\begin{defin}
The semantic value $\sem{\varphi}_M$ of a formula $\varphi$ in the model $M$ is the set of teams satisfying it:
$$\sem{\varphi}_M = \set{ X | \dom(X) = \FV(\varphi) \text{ and } M ,X \models  \varphi}.$$
\end{defin}

Here we have chosen one of two possible paths, the other one would be
to define the semantic value of a formula to be the pair of the set of teams
satisfying the formula and the set of teams that satisfy the negation of the
formula: $\langle \sem{\varphi}_M, \sem{ \varphi^\lnot}_M\rangle$, where $\varphi^\lnot$ is the formula in negated normal form that corresponds to $\lnot \varphi$.  That would have had the advantage of making negation compositional (i.e., a
function of semantic values). However, it would also make the theory technically much more involved. 

It should also be pointed out that Kontinen and V\"a\"an\"anen in
\cite{Kontinen:2009} proved that if
$\varphi$ and $\psi$ are formulas in Dependence logic with the same free variables such
that $\sem{\varphi}_M \cap \sem{\psi}_M = \set{\emptyset}$ then there is a
formula $\sigma$ in Dependence logic
 such that $\sem{\sigma}_M=\sem{\varphi}_M$ and
$\sem{\sigma^\lnot}_M=\sem{\psi}_M$. Thus the ``positive'' and the
``negative'' semantic values, taken to be $\sem{\varphi}_M$ and $\sem{ \varphi^\lnot}_M$ respectively, of formulas are independent, in the sense that only knowing the positive (negative) semantic value of a formula does not give any information on the negative (positive) semantic value of the same formula.

DF-logic has a different syntax than Dependence logic but a similar semantics. Instead of
introducing dependence atoms we introduce new quantifiers\footnote{Observe that these quantifiers are not \emph{generalized} quantifiers in the sense of Lindstr\"om and Mostowski since they are defined using Hodges semantics, not Tarskian semantics.}
$\exists x \backslash \bar y$ where $\bar y$ is a finite sequence of variables. We call $\exists x \backslash \bar y$ a \emph{backslashed} quantifier. $\exists x \backslash \bar y \ \varphi$ has the same truth condition as
$$\exists x (\fd{\bar y}{x} \land \varphi).$$

Independence friendly logic, IF-logic, is syntactically similar to DF-logic but with slashed
quantifiers instead of backslashed ones.\footnote{In fact, what we describe here is, strictly speaking, what Hodges in \cite{Hodges:2008} calls {slash logic} and not IF-logic.}
 There is a non-compositional translation of IF-logic into Dependence logic: Given a sentence
$\sigma$ in IF-logic we replace each occurrence of 
$\exists x / \bar y \ \varphi$ by
$$\exists x (\fd{\bar z}{x} \land \varphi)$$
where $\bar z$ are the variables occurring in $\sigma$ but not in $\bar y$.

These three logics, IF-, DF- and Depedence logic, are all equivalent in the sense that for each
formula in one of the logics there are formulas in the other logics
satisfied by the same teams in the same structures.
The translations from DF-logic to Dependence logic and back are compositional, but the translations to and from IF-logic is not.

IF-logic has one rather strange property which Dependence logic and DF-logic
does not. In IF-logic an extra variable could be used for ``signaling'' as in the following example:
 $$M \not\models \forall x \exists y \mathord/ x \  x=y$$ if
$|M| > 1$, but $$\models \forall x \exists z \exists y \mathord/ x
\ x=y.$$  
Thus quantifying over variables not occurring in a sentence might change
the truth value of that sentence. This is rather counterintuitive, which should give us a slight preference for DF-logic and Dependence logic over IF-logic.

\section{Generalized quantifiers}

We will now give a rather long argumentation leading up to Definition \ref{qdef} which gives truth conditions for generalized quantifiers in logics whose semantics are given in the framework of teams, such as Dependence logic. As will be apparent later, if a generalized quantifier is definable in existential second order logic, ESO, the result of adding the quantifier to Dependence logic will not change the strength of the logic, it will still be of the same strength as ESO.  However, the translation into ESO will not be compositional, see the discussion in Section \ref{conclusion}. The main reason for introducing generalized quantifiers in this framework is not to gain strength, but to give a compositional explanation of branching. 

In the following we fix a structure and let $M$ ambiguously denote it and its domain.
We will ambiguously  use $Q$ to denote both a global quantifier and the
local version on $M$, which really should be denoted by $Q_M$. We write $\sem{\varphi}$ as a shorthand for $\sem{\varphi}_M$. 

Teams are sets of assignments, and thus not
relations, however if $X$ is a team with
$\dom(X)=\set{x_1,\ldots,x_k}$  let
$$X(x_1,\ldots,x_k)=\set{\langle s(x_1),\ldots, s(x_k)\rangle  | s \in X}$$ be the relation on $M$ we get by applying the assignments in $X$ to the tuple $\langle x_1,\ldots,x_k\rangle$.
Furthermore, if $R \subseteq M^k$ let $[R/x_1,\ldots, x_k]$ be the team
$$\set{ \set{\langle x_1,a_1\rangle, \ldots, \langle x_k, a_k\rangle} |
\langle a_1,\ldots, a_k\rangle \in R}.$$
We will be quite sloppy in distinguishing between teams and relations,
instead identifying the team $X$ with the relation $X(\bar x)$ where
$\bar x$ is $\dom(X)$ listed with the indices in increasing order, and $R$ with
$[R/x_0,\dots,x_{k-1}]$ where $k$ is the arity of $R$.

In \cite{Abramsky:2009} Abramsky and V\"a\"an\"anen give an argument for the correctness of the truth conditions for $\forall$ and $\exists$ in Hodges semantics. In short the argument goes as follows: First they show that Hodges semantics is a special case of a more general construction, that of the free commutative quantale. Second, they show
that the truth conditions of the quantifiers in Hodges semantics are the image
under this general construction of the usual Tarskian truth conditions. Let us see how this works.

Start off by letting the Hodges space be
$$ \H(M^n)= \L(\P(M^n))$$
where $\L(X)$ is the set of order ideals,\footnote{Order ideals are sets closed downwards, i.e., $I \subseteq \P(M^n)$ is an order ideal if for every $A \subseteq M^n$ and any $B \in I$ such that $B \subseteq A$ $B \in I$.} or \emph{down sets}, of the ordered set $X$ and
$\P(M^n)$ is the power set of $M^n$ ordered by set  inclusion.
Given a formula with $n$ free variables in Dependence logic the set of relations corresponding to the teams satisfying the formula is an element of $\H(M^n)$, we therefore think of $\H(M^n)$ as the set of possible semantic values of formulas.\footnote{Observe that not all elements of $\H(M^n)$ are semantic values of formulas in Dependence logic, see \cite{Kontinen:2009} for a complete characterization of elements of $\H(M^n)$ which are.}
Since the elements of $\H(M^n)$ are all closed downwards we restrict ourselves, at the moment, 
to logics where satisfaction is closed under taking subteams.  
Note that $\emptyset$ is a down set and thus an element of $\H(M^n)$. 

If we reformulate the truth conditions for $\exists$ and $\forall$ in algebraic terms as operations mapping semantic values in $\H(M^{n+1})$ to semantics values in $\H(M^n)$ we get the following.
The Hodges quantifiers $\exists_{\H}$ and $\forall_{\H}$ are families of functions
\begin{align*}
\forall_{\H},\exists_{\H} &: \H(M^{n+1}) \to \H(M^n),\\
\exists_{\H}(\X) &= \set{R| \exists f: R \to M \text{ s.t. } R[f] \in
\X},\\
\forall_{\H}(\X) &= \set{R| R[M] \in \X},
\end{align*}
 where $R[f]= \set{\langle \bar a, f(\bar a) \rangle | \bar a \in
R}$ and $R[M] = \set{\langle \bar a, b \rangle | \bar a \in
R, b \in M}$. 

The truth condition for the existential quantifier can now be restated as:
$$ \sem{\exists x \varphi } = \bigl[\exists_{\H} \bigl(\sem{\varphi}[\bar
y,x]\bigr)/\bar y\bigr],$$ where $\bar y$ are the free variables of $\exists x
\varphi$. The corresponding equality is of course true also for the universal quantifier. 
Let us now see that these truth conditions are \emph{forced} upon us by the operation $\L$. 

Given a function $h: \P(A) \to \P(B)$ we define the Hodges lift of that function as:
$$\L(h): \H(A) \to \H(B), \ \X \mapsto \mathord\downarrow \set{h(X) | X \in \X},$$
where $\mathord\downarrow \X$ is the downward closure of $\X$, i.e.,
$$\mathord\downarrow \X = \set{X | \exists Y \in \X, X \subseteq Y}.$$
To every generalized quantifier $Q$ of type $\langle 1\rangle$ there is a corresponding
function on the Tarskian semantic values: 
$$h_Q : \P(M^{n+1}) \to \P(M^n), \ R \mapsto \set{ \bar a | R_{\bar a} \in Q},$$
where $R_{\bar a} = \set{b | \langle \bar a,b\rangle \in R}$.
Now the truth condition for $\exists$ and $\forall$ in the Hodges setting is just the image under $\L$ of the truth conditions for $\exists$ and $\forall$ in the Tarskian setting, in the sense that: $\exists_{\H} = \L(h_\exists)$,  and $\forall_{\H}  = \L(h_\forall)$.
These facts follow easily from the definitions, but see Proposition \ref{prop:4}.

We do not have to stop here. Let us see what happens if we start with some other quantifier $Q$ of type $\langle 1\rangle$ and argue in the same way that led us to the truth conditions for $\exists$ and $\forall$ in the Hodges setting.
Thus, for a generalized quantifier $Q$, let us write  $Q_{\H}$, or $\L(Q)$, for $\L(h_Q)$.
Let $Y[F] = \set{\langle\bar a,b\rangle | \bar a \in Y, b \in F(\bar a)}$. 

\begin{lemma}
Suppose $\X \subseteq \power(M^k)$ and $Q$ a monadic quantifier. (a) If $Q$ is such that $\emptyset \notin Q$, then
$$\set{h_Q(X) |X \in \X} =  \set{ Y | \exists F : Y \to Q \text{ s.t. } Y[F] \in \X}.$$
(b) Furthermore, for any $Q$, if $\X$ is a down set then
$$\set{\set{\bar a | X_{\bar a} \in Q  } |X \in \X}$$
is also a down set.
\end{lemma}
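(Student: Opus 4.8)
The plan is to analyze everything row-by-row, resting on a single computational fact that governs how $h_Q$ interacts with the operation $Y \mapsto Y[F]$: for any $Y \subseteq M^{k-1}$ and any $F : Y \to Q$ one has $(Y[F])_{\bar a} = F(\bar a)$ when $\bar a \in Y$ and $(Y[F])_{\bar a} = \emptyset$ when $\bar a \notin Y$. Consequently $h_Q(Y[F])$ consists of those $\bar a \in Y$ with $F(\bar a) \in Q$ together with those $\bar a \notin Y$ with $\emptyset \in Q$; since $F$ takes values in $Q$ while $\emptyset \notin Q$, this collapses to $h_Q(Y[F]) = Y$. I would isolate this identity as the engine of the whole lemma.

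For the inclusion $\supseteq$ in (a) I would take $Y$ together with a witness $F : Y \to Q$ such that $Y[F] \in \X$, and simply invoke the identity above to see that $Y = h_Q(Y[F])$; since $Y[F] \in \X$, this exhibits $Y$ as $h_Q$ of a member of $\X$. For the inclusion $\subseteq$ I would start from $X \in \X$, put $Y = h_Q(X) = \set{\bar a | X_{\bar a} \in Q}$, and define $F(\bar a) = X_{\bar a}$ for $\bar a \in Y$; this is a legitimate map $Y \to Q$ precisely because $\bar a \in Y$ forces $X_{\bar a} \in Q$. Then $Y[F]$ is the restriction of $X$ to the rows indexed by $Y$, so $Y[F] \subseteq X$, and downward closure of $\X$ (the standing assumption that semantic values are down sets) gives $Y[F] \in \X$, as required.

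For (b) I would run the same restriction trick in the other direction: given $Y = h_Q(X_0)$ with $X_0 \in \X$ and an arbitrary $Y' \subseteq Y$, set $X' = \set{\pair{\bar a, b} \in X_0 | \bar a \in Y'}$. Downward closure gives $X' \in \X$, and a row-wise check shows $X'_{\bar a} = (X_0)_{\bar a} \in Q$ for $\bar a \in Y'$ (using $Y' \subseteq Y = h_Q(X_0)$) while $X'_{\bar a} = \emptyset$ for $\bar a \notin Y'$; hence $h_Q(X') = Y'$, so $Y'$ lies in the image and the image is a down set.

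I expect the main obstacle to be bookkeeping around the empty row rather than anything deep. Both the $\supseteq$ direction of (a) and the whole of (b) break down if some $\bar a$ outside $Y$ (resp. outside $Y'$) has its section equal to $\emptyset$ and $\emptyset \in Q$, since then $h_Q$ would readmit that $\bar a$ and destroy the equalities $h_Q(Y[F]) = Y$ and $h_Q(X') = Y'$. So the hypothesis $\emptyset \notin Q$ is doing genuine work in exactly these places, and keeping the identity $h_Q(Y[F]) = Y$ as a standalone step makes this dependence visible; for a $Q$ with $\emptyset \in Q$ the image of a down set need not itself be a down set, and one would then have to replace the image by its downward closure.
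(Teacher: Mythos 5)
Your proof follows the paper's own route: the row-wise identity $Y[F]_{\bar a}=F(\bar a)$ for $\bar a\in Y$, $Y[F]_{\bar a}=\emptyset$ for $\bar a\notin Y$, is exactly the one fact the paper's proof of (a) cites, and your two restriction arguments supply what the paper leaves to the reader (its proof of (b) is just ``is immediate''). The interesting point is that your bookkeeping is not pedantry: it exposes that the lemma, as literally stated, is false, and that the extra hypotheses you import are in fact necessary. In (a), your left-to-right inclusion appeals to ``the standing assumption'' that $\X$ is a down set; no such hypothesis appears in the statement of (a), and without it that inclusion fails. Take $M=\set{0,1}$, $Q=\set{M}$ (so $\emptyset\notin Q$), and $\X=\set{X}$ where $X=\set{\pair{0,0},\pair{0,1},\pair{1,0}}$. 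Then $h_Q(X)=\set{0}$ lies in the left-hand side, but every $F\colon Y\to Q$ is constantly $M$, so $Y[F]=Y\times M$ has even cardinality and never equals $X$; the right-hand side is therefore empty.

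For (b) the situation is the mirror image: the statement claims the conclusion ``for any $Q$'', while your argument uses $\emptyset\notin Q$ (to know that rows outside $Y'$ are not readmitted by $h_Q$), and your closing sentence asserts that for $\emptyset\in Q$ the image of a down set need not be a down set. You are right and the statement is wrong: $\X=\set{\emptyset}$ is a down set, and if $\emptyset\in Q$ then $h_Q(\emptyset)=M^{k-1}$, so the image is $\set{M^{k-1}}$, which is not a down set. Thus the correctly stated lemma assumes both that $\X$ is a down set and that $\emptyset\notin Q$, and under those hypotheses your proof is complete and correct. This is also exactly how the paper uses the lemma: Proposition \ref{prop:3} invokes it only for $\X\in\H(M^k)$, which is down-closed, and only in the case $\emptyset\notin Q$, the case $\emptyset\in Q$ being handled there by a separate direct computation. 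In short, your proof is the paper's proof done carefully, and the care pays off.
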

\begin{proof}
(a) Follows from the fact that $Y[F]_{\bar a} = F(\bar a)$ 
if $\bar a \in Y$ and $Y[F]_{\bar a} = \emptyset$ otherwise. 

(b) Is immediate.
\end{proof}

\begin{prop}\label{prop:3} For any $Q$  of type $\langle 1\rangle$ and any $\X \in \H(M^k)$ we have
$$Q_{\H}(\X) =  \set{ Y | \exists F : Y \to Q \text{ s.t. } Y[F] \in
\X}.$$ 
\end{prop}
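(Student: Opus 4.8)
The plan is to unfold both sides to their defining expressions and then reduce everything to the Lemma, treating separately the one degenerate case its hypothesis rules out. By definition $Q_{\H} = \L(h_Q)$, and so
$$Q_{\H}(\X) = \mathord\downarrow \set{h_Q(X) | X \in \X};$$
write $\mathcal R(\X) = \set{Y | \exists F : Y \to Q \text{ s.t. } Y[F] \in \X}$ for the right-hand side of the proposition, so that the goal is $Q_{\H}(\X) = \mathcal R(\X)$. Throughout recall that $\X \in \H(M^k)$ means $\X$ is an order ideal, i.e.\ a down set, which is exactly the extra hypothesis the Lemma wants.

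First I would settle the principal case $\emptyset \notin Q$. Here Lemma (a) applies directly and yields $\set{h_Q(X) | X \in \X} = \mathcal R(\X)$. Since $\X$ is a down set, Lemma (b) tells us that $\set{h_Q(X) | X \in \X}$ is itself a down set, so the operator $\mathord\downarrow$ occurring in the definition of $\L$ fixes it. Combining the two gives $Q_{\H}(\X) = \mathord\downarrow \set{h_Q(X) | X \in \X} = \set{h_Q(X) | X \in \X} = \mathcal R(\X)$, as required.

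The hard part, and the only place real care is needed, is the case $\emptyset \in Q$, which is precisely what Lemma (a) excludes; the resolution is that both sides then degenerate. If $\X = \emptyset$ the left-hand side is $\mathord\downarrow \emptyset = \emptyset$, and since no $F$ can satisfy $Y[F] \in \emptyset$ we get $\mathcal R(\X) = \emptyset$ too. Otherwise $\X \neq \emptyset$, and being a down set it contains $\emptyset$; since $\emptyset_{\bar a} = \emptyset$ for every $\bar a$ and $\emptyset \in Q$, we have $h_Q(\emptyset) = \set{\bar a | \emptyset \in Q} = M^{k-1}$, so $M^{k-1}$ lies in $\set{h_Q(X) | X \in \X}$ and its downward closure is all of $\P(M^{k-1})$. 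On the other side, for an arbitrary $Y$ the constant function $F$ with value $\emptyset \in Q$ gives $Y[F] = \emptyset \in \X$, whence $Y \in \mathcal R(\X)$; thus $\mathcal R(\X) = \P(M^{k-1})$ as well. In either subcase the two sides coincide, completing the argument. I expect this degenerate case to be the sole obstacle, since one cannot appeal to the Lemma there and must instead check directly that the left collapses through $h_Q(\emptyset) = M^{k-1}$ and the right through the choice $F \equiv \emptyset$.
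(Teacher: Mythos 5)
Your proof is correct and follows essentially the same route as the paper's: the case $\emptyset \notin Q$ is dispatched by the Lemma (parts (a) and (b) together, so that the downward closure in $\L$ is redundant), and the case $\emptyset \in Q$ is handled by observing that both sides degenerate to $\emptyset$ or to the full power set according to whether $\X$ is empty. The only difference is that you spell out the justification the paper leaves implicit — that a nonempty down set contains $\emptyset$, so $h_Q(\emptyset)=M^{k-1}$ forces the left side to collapse, while the constant function $F\equiv\emptyset$ forces the right side to collapse — which is a welcome filling-in of detail rather than a new idea.
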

\begin{proof}
Follows directly from the lemma whenever $\emptyset \notin Q$.
On the other hand if $\emptyset \in Q$ then $Q_{\H}(\X) = \P(M^n)$
whenever $\X \neq \emptyset$ and $Q_{\H}(\emptyset) = \emptyset$. Also
if $F$ is the constant function $F(\bar a) = \emptyset$ then
$Y[F]=\emptyset$, thus $\set{ Y | \exists F : Y \to Q \text{ s.t. }
Y[F] \in \X}$ is $\emptyset$ or $\P(M^n)$ depending on whatever $\X$ is
the empty set or not.
\end{proof}

This all leads up to the following truth condition:
\begin{defin}\label{qdef}
Let $Q$ be a monotone generalized quantifiers $Q$ of type $\langle 1 \rangle$ and $\varphi$ some formula in a logic whose semantics is based on teams. We define what it means for the team $X$ to satisfy the formula $\varphi$ by the following truth condition.
$$M ,X \models  Qx\varphi \text{ iff there exists } F: X \to Q \text{ such that } M,{X[F/x]} \models \varphi. $$
\end{defin}

This applies even for non-monotone quantifiers but for those quantifiers $Q$ the truth condition above does not make a whole lot of sense as the following example shows.
Let $M=\mathbb N$ and $Q=\set{A}$ where $A$ is the set of even numbers. According to the truth condition above $M,{\set{\epsilon}} \models Qx (x=x)$ since there is a team $X=A(x)$ such that $M
,X \models  x=x$.

For this reason let us, for now, restrict the definitions to monotone quantifiers. 

We do not however need to restrict to type $\langle 1
\rangle$ as the definition easily can be extended to all
quantifiers of  type $\langle k\rangle$:
$$M ,X \models  Q\bar x \varphi \text{ iff there exists } F: X \to Q \text{ such that } M,{X[F/\bar x]}  \models\varphi.$$
Here $X[F/\bar x]$ is the team $\set{s[a_1/x_1,\ldots a_k/x_k] | s \in X, \langle a_1,\ldots a_k\rangle \in F(s)}$.

The following easy proposition suggests that we indeed have the right truth condition, at least for monotone quantifiers of type $\langle 1 \rangle$. Given some language $L$ let $L(Q)$ the set of first order formulas in that language extended with the generalized quantifier $Q$.

\begin{prop}\label{prop:4} Below, let $Q$ be a monotone quantifier of type $\langle 1 \rangle$.
\begin{enumerate}
\item $\L(\exists)(\X)=\set{Y | \exists f : Y \to M \text{ s.t. } Y[f] \in \X}$.
\item $\L(\forall)(\X)=\set{Y | Y[M] \in \X}$.
\item For L($Q$)-formulas $\varphi$ and teams $X$, $M ,X \models  \varphi$ iff
for all $s \in X$, $M ,s \models  \varphi$.
\item For L($Q$)-sentences $\sigma$, $M,\epsilon \models \sigma$ iff $M ,{\set{\epsilon}}\models \sigma$.\footnote{Observe that $M ,\epsilon\models \sigma$ uses the ordinary Tarskian truth conditions, but $M, {\set{\epsilon}} \models \sigma$ Hodges semantics.}
\item $\L(Q_1Q_2)=\L(Q_1) \circ \L(Q_2)$, where $Q_1Q_2$ is the iteration (product) of $Q_1$ and $Q_2$ and $\L(Q_1) \circ \L(Q_2)$ is just the ordinary composition of functions (observe that this equality is really an infinite conjunction of equalities since $\L(Q)$ is a family of functions).
\end{enumerate}
\end{prop}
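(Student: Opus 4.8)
The plan is to treat the five items separately: (1) and (2) are immediate specializations of Proposition~\ref{prop:3}, (3) is an induction on the structure of $\varphi$, (4) is a corollary of (3), and (5) reduces to a functoriality property of the downset operator $\L$. For (1) and (2) I would start from Proposition~\ref{prop:3}, which gives $\L(Q)(\X) = \set{Y | \exists F : Y \to Q \text{ s.t. } Y[F] \in \X}$ when $Q$ is read as the local quantifier $Q_M$. Taking $Q = \exists_M = \set{A \subseteq M | A \neq \emptyset}$, a witness $F : Y \to \exists_M$ assigns a nonempty $F(\bar a)$ to each $\bar a \in Y$; picking any $f(\bar a) \in F(\bar a)$ gives $Y[f] \subseteq Y[F]$, so $Y[f] \in \X$ since $\X$ is a down set; conversely a single-valued $f : Y \to M$ gives the witness $F(\bar a) = \set{f(\bar a)} \in \exists_M$ with $Y[F] = Y[f]$. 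Replacing set-valued by single-valued witnesses yields (1). For (2) take $Q = \forall_M = \set{M}$: the only $F : Y \to \forall_M$ is the constant $F(\bar a) = M$, for which $Y[F] = Y[M]$, and Proposition~\ref{prop:3} collapses to $\set{Y | Y[M] \in \X}$.

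For (3) I would induct on $\varphi$. The atomic and negated-atomic cases hold by the definition of team satisfaction; the $\land$, $\lor$, $\exists$ and $\forall$ cases are the standard flatness computations (for $\lor$ one splits $X$ into the assignments satisfying each disjunct, and for $\exists$ one turns a pointwise choice of witnesses into a single $f : X \to M$). The crucial case is $Qy\,\varphi$. By Definition~\ref{qdef}, $M,X \models Qy\,\varphi$ iff there is $F : X \to Q$ with $M, X[F/y] \models \varphi$, which by the induction hypothesis means $M, s[a/y] \models \varphi$ for every $s \in X$ and every $a \in F(s)$, i.e. $F(s) \subseteq B_s := \set{a | M, s[a/y] \models \varphi}$. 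The backward direction is easy: if each $B_s \in Q$, set $F(s) = B_s$. For the forward direction one has $F(s) \in Q$ with $F(s) \subseteq B_s$, and here monotonicity of $Q$ is exactly what lets us conclude $B_s \in Q$, i.e. $M,s \models Qy\,\varphi$ in the Tarskian sense. This is the one place monotonicity is used and it is the main obstacle: without it the forward implication fails, matching the non-monotone counterexample after Definition~\ref{qdef}.

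Item (4) is then immediate, since a sentence $\sigma$ has $\FV(\sigma) = \emptyset$, so by (3) $M, \set{\epsilon} \models \sigma$ iff $M, s \models \sigma$ for the unique $s \in \set{\epsilon}$, i.e. iff $M, \epsilon \models \sigma$. For (5) I would first verify the combinatorial identity $h_{Q_1 Q_2} = h_{Q_1} \circ h_{Q_2}$ as maps $\P(M^{n+2}) \to \P(M^n)$, where $h_{Q_1 Q_2}$ is the type-$\langle 2\rangle$ analogue $R \mapsto \set{\bar a | R_{\bar a} \in Q_1 Q_2}$ with $R_{\bar a}$ the binary section of $R$; unwinding $h_{Q_2}$ and then $h_{Q_1}$ recovers exactly the defining condition of the product quantifier, so the two maps coincide. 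It then remains to show that $\L$ is functorial on monotone maps, i.e. $\L(h \circ g) = \L(h) \circ \L(g)$ whenever $h$ is monotone: writing $\L(g)(\X) = \mathord\downarrow \set{g(X) | X \in \X}$, one inclusion holds for arbitrary $g, h$, while the reverse uses that $Y \subseteq g(X)$ implies $h(Y) \subseteq h(g(X))$, which is precisely monotonicity of $h$. Since $h_{Q_1}$ is monotone because $Q_1$ is a monotone quantifier, combining the two facts gives $\L(Q_1 Q_2) = \L(h_{Q_1} \circ h_{Q_2}) = \L(h_{Q_1}) \circ \L(h_{Q_2}) = \L(Q_1) \circ \L(Q_2)$, an equality of the corresponding families of functions; the real work here is the functoriality lemma, which again hinges on monotonicity.
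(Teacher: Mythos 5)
Your proposal is correct throughout. For items (1)--(4) it coincides with the paper's own proof: (1) and (2) are the same specializations of Proposition~\ref{prop:3} (downward closure of $\X$ giving the passage from set-valued to single-valued witnesses in (1), and the observation that the only function into $\forall_M$ is the constant function with value $M$ in (2)); (3) is the same induction with $Qy\,\varphi$ as the only nontrivial case and monotonicity invoked at exactly the same point; (4) is the same corollary. Where you genuinely diverge is item (5). The paper argues at the level of teams: it writes all three operators as witness conditions via Proposition~\ref{prop:3} (implicitly also for the type $\langle 2\rangle$ quantifier $Q_1Q_2$) and converts witnesses explicitly, building $H(s)=\set{\langle a,b\rangle | a \in G(s), b \in F(s,a)}$ from $G\colon Z\to Q_1$ and $F\colon Z[G]\to Q_2$ so that $Z[H]=Z[G][F]$, and conversely extracting $G(s)=\set{a | H(s)_a \in Q_2}$ and $F(s,a)=H(s)_a$ from a given $H$, with downward closure of $\X$ absorbing the inclusion $Z[G][F]\subseteq Z[H]$. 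You instead work at the Tarskian level and split the claim into the identity $h_{Q_1Q_2}=h_{Q_1}\circ h_{Q_2}$ (pure unwinding of the product quantifier) plus a functoriality lemma $\L(h\circ g)=\L(h)\circ\L(g)$, valid whenever the outer map $h$ is monotone, applied with $h=h_{Q_1}$. Both arguments are sound. Your decomposition buys structure: it needs no extension of Proposition~\ref{prop:3} to type $\langle 2\rangle$ quantifiers, it isolates the single place where monotonicity (of $Q_1$) is genuinely used, and it fits the Abramsky--V\"a\"an\"anen theme that Hodges semantics is the $\L$-lift of Tarskian semantics. The paper's proof buys concreteness: the explicit correspondence between witnesses for the prefix $Q_1x\,Q_2y$ and witnesses for the product quantifier is exactly the kind of computation reused later when relating slashed prefixes to branching.
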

\begin{proof}
(1) The right-to-left inclusion is immediate using Proposition \ref{prop:3}. The other inclusion follows from the fact that if $Y[F]
\in \X$ and $f: Y \to M$ is such that $f(\bar a) \in F(\bar a)$ for
each $\bar a \in Y$ then $Y[f] \subseteq Y[F] \in \X$ and thus $Y[f]
\in \X$.

(2) Any function $F: X \to \forall$ has to be the constant function
taking $s$ to $M$. Thus $\L(\forall)(\X)=\set{Y | Y[M] \in \X}$ follows from Proposition \ref{prop:3}.

(3) The argument is an induction on the formula $\varphi$, the only non trivial case being when $\varphi$ is $Qx\psi$.
If $M ,X \models  Qx\psi$ then there is a function $F: X \to Q$ such
that $M, X[F/x]\models \psi$, which means that for each $s \in X$
there is a set $F(s) \in Q$ such that $F(s) \subseteq \psi^{M,s}$,
where $\psi^{M,s}=\set{a \in M | M,{s[a/x]} \models \psi}$.
By the monotonicity of $Q$ we have $\psi^{M,s} \in Q$ and thus that
$M ,s \models  Qx\psi$ for every $s \in X$. For the other direction
suppose $M ,s \models  Qx\psi$ for every $s \in X$, and let $F(s)$ be
$\psi^{M,s}$.

(4) Follows directly from (3).

(5) By unwinding the definitions we get
\begin{multline*}
\L(Q_1) \circ \L(Q_2) (\X)= \L(Q_1)(\set{Y | \exists F : Y \to Q_2 \text{ s.t. }
Y[F] \in \X}) \\ = \set{Z | \exists G : Z \to Q_1 \text{ s.t. } \exists F:Z[G] \to
Q_2,
Z[G][F] \in \X} \\ = \set{Z | \exists H : Z \to Q_1Q_2 \text{ s.t. } Z[H] \in \X}.
\end{multline*}
The left to right inclusion in the last equality comes from the fact that if such $F$ and $G$ exist
then we can define $H(s)$ to be $$\set{\langle a,b\rangle | a \in G(s),
b \in F(s,a)} \in Q_1Q_2$$ and then $Z[H] = Z[G][F]$. 
For the other inclusion assume that such an $H$ is given and let $a
\in G(s)$ if $H(s)_a \in Q_2$ and $F(s,a)=H(s)_a$, then $F: Z \to
Q_1$, $G:Z[F] \to Q_2$ and $Z[G][F] \subseteq Z[H] \in \X$. 
\end{proof}

\subsection{Quantifiers and dependence}

Proposition \ref{prop:4} states that the truth condition for generalized 
quantifiers in the Hodges setting behaves nicely when applied to formulas 
without dependence atoms.
The reason for introducing generalized quantifiers in Hodges semantics however 
is to use them with dependencies. Let us see how well they handle relations of 
dependences and independences.

First let us try to see what happens if we introduce the dependence atom of Dependence logic into our logic.
If $Q$ contains no singletons and not the empty set then $$M \nmodels  Qx (\fd{}{x}\land x=x)$$ as long as $X$ is non-empty.\footnote{$\fd{}{x}$ is a short-hand for $\fd{\emptyset}{x}$, i.e., the statement that $x$ is \emph{constant}.} This is counterintuitive since the sentence $Qx (\fd{}{x} \land x=x)$ should be equivalent to $Qx \ x=x$. 
There are also problems with the notion of definability as the next example shows

Assume that $Q$ is definable by a first order sentence $\sigma$ with $P$ as
the only unary predicate. This means that 
$$M \models Qx\varphi \text{ iff } M \models
\sigma[\varphi/P]$$ for all first order formulas $\varphi$ such that no free variables of $\varphi$ occur in $\sigma$.\footnote{$\sigma[\varphi/P]$ is $\sigma$ with all occurences of $P(x_1,\ldots,x_k)$ replaced by $\varphi(x_1,\ldots,x_k)$.}
It would be natural to think that $\sigma$ also defines $Q$ in
Dependence logic.
However if $Q=\exists^{\geq 2}$, $\sigma$ is
\[\exists x \exists y (x \neq y \land Px \land Py),\]
and $\varphi$ is $\fd{}{z}$, then $\not\models \exists^{\geq 2} z \varphi$. But
for any  $|M|> 1$, $M \models \sigma[\varphi/P]$. Thus, introducing dependence atoms into the language seems to destroy nice properties of the logic. However, we think that the dependence atom should take the blame for this, and not the truth condition for the generalized quantifier.

There are two different solutions for handling dependencies in the setting with generalized quantifiers. Either one redefines the dependence atom, or one skips dependence as an atomic property altogether and define slashed and/or backslashed versions of the generalized quantifiers, very much as is done in DF- and IF-logic. Let us start with the latter suggestion and postpone the definition of a new dependence atom until section \ref{depatom}.

\begin{multline*}
M ,X \models  Qx \backslash \bar y\, \varphi \text{ iff there exists } 
F: X \to Q \text{ such that } \\M ,X[F/x] \models  \varphi 
\text{ and $F$ depends only on the values of } \bar y. 
\end{multline*}
Or slightly more formally:
\begin{defin}
$M ,X \models  Qx \backslash \bar y \varphi$ iff there exists 
\[G: X\restrictto \bar y \to Q \text{ such that } M ,X[G/x] \models  \varphi,
\]
where $s\restrictto \bar y=\set{\langle v,a\rangle | \langle v,a\rangle \in s, v \in
\bar y}$, $X\restrictto \bar y=\set{s\restrictto \bar y : s \in X}$ and $X[G/x]=X[F/x]$ where $F(s)=G(s\restrictto \bar y)$.
\end{defin}

We also define $M ,X \models  Qx / \bar y \,\varphi $ in the obvious way: iff there exists $$F: X\restrictto(\dom(X) \setminus \bar y) \to Q$$ such that $M ,X[F/x] \models  \varphi$.

Let us denote first order logic with the generalized quantifiers $Q$ by $\text{SBL}(Q)$ when we allow both slashed and backslashed versions of the quantifiers $\exists$, $\forall$, and $Q$. 
Note that we can translate backslashed quantifiers into formulas where we only allow slashed ones, and vice versa. However, these translations are not compositional and therefore we include both slashed and backslashed quantifiers in the logic $\text{SBL}(Q)$.

\begin{prop}\label{prop:subteams}
Let $Q$ be a monotone quantifier of type $\langle k \rangle$.
$\mathrm{SBL}(Q)$ is closed under taking subteams, i.e., if $M ,X \models  \varphi$ and $Y \subseteq X$ then $M ,Y \models  \varphi$ for all formulas $\varphi$ in $\mathrm{SBL(Q)}$.
\end{prop}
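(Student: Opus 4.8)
The plan is to prove the statement by induction on the structure of $\varphi$, exploiting the fact that $\SBL(Q)$ contains no dependence atoms: its formulas are built from atoms $R(\bar t)$ and their negations $\lnot R(\bar t)$ using $\land$, $\lor$, and the (plain, slashed, and backslashed) quantifiers $\exists$, $\forall$ and $Q$. I should note at the outset that the only role of monotonicity here is that the truth condition of Definition \ref{qdef} is stipulated only for monotone $Q$; the induction itself never invokes it. For the base case, $M, X \models R(\bar t)$ means that every $s \in X$ Tarski-satisfies $R(\bar t)$, and since $Y \subseteq X$ gives $s \in X$ for every $s \in Y$, the condition is inherited by $Y$; the negated-atom case is identical. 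The conjunction case is immediate by applying the induction hypothesis to both conjuncts.

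The disjunction case is the first that requires a small idea. If $M, X \models \varphi \lor \psi$, fix a witnessing split $X = Y_1 \cup Y_2$ with $M, Y_1 \models \varphi$ and $M, Y_2 \models \psi$. The move is to intersect the split with $Y$: put $Y_1' = Y_1 \cap Y$ and $Y_2' = Y_2 \cap Y$, so that $Y_1' \cup Y_2' = (Y_1 \cup Y_2) \cap Y = Y$ while $Y_i' \subseteq Y_i$. The induction hypothesis then yields $M, Y_1' \models \varphi$ and $M, Y_2' \models \psi$, hence $M, Y \models \varphi \lor \psi$.

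All quantifier cases follow one uniform template: a witness for $X$ is a choice function, and restricting its domain to the part determined by $Y$ produces a witness for $Y$ whose associated team is a subteam of the original, after which the induction hypothesis closes the case. Concretely, for $Q\bar x \varphi$ a witness is $F: X \to Q$ with $M, X[F/\bar x] \models \varphi$; since every assignment in $Y[F\restrictto Y/\bar x]$ arises from some $s \in Y \subseteq X$, we have $Y[F\restrictto Y/\bar x] \subseteq X[F/\bar x]$, and the induction hypothesis applies. The plain $\exists$ and $\forall$ are the instances $Q = \exists_M$ and $Q = \forall$ (using Proposition \ref{prop:4}(1),(2)); for $\exists y$ one restricts $f : X \to M$ to $Y$, and for $\forall y$ one uses that the only witness is the constant $M$ together with $Y[M/y] \subseteq X[M/y]$.

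The slashed and backslashed quantifiers are where one must be slightly careful, and I expect this bookkeeping to be the only real (if minor) obstacle. For $Q\bar x \backslash \bar y \,\varphi$ a witness is $G : X\restrictto \bar y \to Q$; because $Y\restrictto \bar y \subseteq X\restrictto \bar y$, the restriction $G\restrictto(Y\restrictto \bar y)$ is still a function of the $\bar y$-values alone, so it is a legitimate backslashed witness, and the team it induces on $Y$ is contained in $X[G/\bar x]$. For $Q\bar x / \bar y \,\varphi$ the witness is defined on $X\restrictto(\dom(X) \setminus \bar y)$; since teams carry a fixed domain, $\dom(Y) = \dom(X)$, so $Y\restrictto(\dom(X)\setminus \bar y) \subseteq X\restrictto(\dom(X)\setminus \bar y)$ and the same restriction argument goes through. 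The point to verify in each case is exactly that the restricted choice function retains the correct dependence pattern and that its induced team genuinely shrinks; everything else is routine. Finally, the degenerate case $Y = \emptyset$ needs no induction at all, since every formula is satisfied by the empty team.
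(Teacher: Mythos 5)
Your proof is correct and follows exactly the route the paper intends: the paper's own proof is just the one-line remark that the claim is ``easily seen by checking the truth conditions of the slashed and the backslashed quantifiers,'' and your induction (restricting witnesses $F$, $f$, $G$ to the subteam and using that the induced team shrinks, plus intersecting the split in the disjunction case) is precisely that check spelled out. Your side observation that monotonicity of $Q$ is never invoked in the induction is also accurate.
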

\begin{proof}
Easily seen by checking the truth conditions of the slashed and the backslashed quantifiers. 
\end{proof}

Observe that 
\begin{gather*}
\exists F: X \to \exists_M \text{ s.t. } X[F/x] \in \X 
\text{ and $F$ only depends on } \bar y \\ \text{ iff }\\ 
\exists f: X \to M \text{ s.t. }  X[f/x] \in \X 
\text{ and $f$ only depends on  } \bar y,
\end{gather*}
for every down set $\X$, making this new definition of $\exists x \backslash \bar y$ compatible with the old one. 
Also, it is easy to see that both conditions are equivalent to the corresponding sentence in Dependence logic:

\begin{prop} Let $Q$ be a monotone quantifier of type $\langle k \rangle$ and $\varphi$ a formula in $\mathrm{SBL}(Q)$ then
$$M ,X \models  \exists x (\fd{\bar y}{x} \land \varphi)\ \text{ iff }\ M ,X \models  \exists x
\backslash \bar y \ \varphi.$$
\end{prop}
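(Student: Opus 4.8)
The plan is to unfold both sides to a single common condition --- the existence of a choice function that depends only on $\bar y$ --- and then to bridge the set-valued formulation built into the backslashed quantifier and the single-valued formulation coming from $\exists$ in Dependence logic by invoking the down-set property guaranteed by Proposition \ref{prop:subteams}. Throughout I assume, as is standard for the quantified variable, that $x \notin \bar y$.

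First I would expand the left-hand side. By the truth conditions for $\exists$ and $\land$, the statement $M, X \models \exists x(\fd{\bar y}{x} \land \varphi)$ holds iff there is $f : X \to M$ with both $M, X[f/x] \models \fd{\bar y}{x}$ and $M, X[f/x] \models \varphi$. The crucial observation is that the first conjunct says precisely that $f$ depends only on $\bar y$: since the update $s \mapsto s[f(s)/x]$ leaves the $\bar y$-values untouched, two elements of $X[f/x]$ agree on $\bar y$ exactly when the underlying assignments satisfy $s\restrictto\bar y = s'\restrictto\bar y$, and they agree on $x$ exactly when $f(s)=f(s')$; so $\fd{\bar y}{x}$ holds in $X[f/x]$ iff $f$ factors as $f(s)=g(s\restrictto\bar y)$ for some $g : X\restrictto\bar y \to M$. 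Hence the left-hand side is equivalent to the existence of $g : X\restrictto\bar y \to M$ with $M, X[g/x] \models \varphi$.

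Next I would expand the right-hand side: by definition $M, X \models \exists x \backslash \bar y\ \varphi$ holds iff there is $G : X\restrictto\bar y \to \exists_M$ with $M, X[G/x] \models \varphi$, where $\exists_M$ is the set of nonempty subsets of $M$. To identify this set-valued condition with the single-valued one just obtained, I would apply the displayed equivalence stated immediately before the proposition to the down set $\X = \sem{\varphi}$. This is exactly where Proposition \ref{prop:subteams} is needed: because $\mathrm{SBL}(Q)$ is closed under subteams, $\sem{\varphi}$ is a down set, so passing from $G$ to a singleton-valued witness $G(s\restrictto\bar y) = \set{g(s\restrictto\bar y)}$ and, conversely, selecting $g(s\restrictto\bar y) \in G(s\restrictto\bar y)$ with $X[g/x] \subseteq X[G/x]$, shows that a set-valued witness depending only on $\bar y$ exists iff a single-valued one does. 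Combining the two expansions gives the claimed equivalence.

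I expect the main difficulty to be bookkeeping rather than genuine mathematical content: one must check carefully that the $\bar y$-values survive the $x$-update (which is why the freshness of $x$ matters), that the informal phrase \emph{depends only on $\bar y$} is faithfully rendered by factoring through $X\restrictto\bar y$ so as to match the restriction in the definition of the backslashed quantifier, and that the down-set reduction is applied to the correct semantic value $\sem{\varphi}$.
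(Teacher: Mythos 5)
Your proof is correct and takes essentially the same route as the paper: the paper gives no separate proof, treating the proposition as immediate from the displayed observation that for down sets $\X$ a set-valued witness $F: X \to \exists_M$ depending only on $\bar y$ exists iff a single-valued $f: X \to M$ does, and this is precisely the bridge you spell out via Proposition \ref{prop:subteams} and a choice function. Your unfolding of $M,X[f/x] \models \fd{\bar y}{x}$ as the statement that $f$ factors through $X\restrictto\bar y$ is exactly the detail the paper leaves as ``easy to see.''
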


Let $Q_1$ and $Q_2$ be monotone quantifiers of type $\langle k\rangle$ and $\langle l\rangle$ respectively, then
$$\mathop\mathrm{Br}(Q_1,Q_2)=\set{(M,R) | R \subseteq M^{k+l},  \exists A \mathord\in Q_1\, \exists B \mathord\in Q_2: A
\times B \subseteq R}.$$ 
The next proposition states that $ Q_1\bar xQ_2\bar y/\bar x $ has the intended
meaning $ \mathop\mathrm{Br}(Q_1,Q_2)$

\begin{prop} If $Q_1$ and $Q_2$ are monotone quantifiers of type $\langle k\rangle$ and $\langle l\rangle$ respectively, then 
$$M,X \models \mathop\mathrm{Br}(Q_1,Q_2) \bar x\bar y \,\varphi\ \text{ iff }\ 
M,X \models Q_1\bar xQ_2\bar y/\bar x \, \varphi,$$
where $\varphi$ is in $\text{SBL}(Q_1,Q_2)$.
\end{prop}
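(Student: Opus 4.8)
The plan is to unwind both team-semantic conditions into statements about witnessing functions and then exhibit a correspondence between the witnesses, matching the two sides up to passage to a subteam. Write $\bar z = \dom(X)$ for the free variables, which are disjoint from $\bar x$ and $\bar y$. The right-hand side $M, X \models Q_1\bar x\, Q_2\bar y/\bar x\,\varphi$ unwinds to: there is $G : X \to Q_1$ such that, setting $X_1 = X[G/\bar x]$ (a team with domain $\bar z \cup \bar x$), there is $H : X_1\restrictto\bar z \to Q_2$ with $M, X_1[H/\bar y] \models \varphi$. The slash $/\bar x$ is precisely what forces $H$ to live on $X_1\restrictto\bar z$, i.e.\ to be independent of the freshly quantified $\bar x$. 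In the main case $\emptyset \notin Q_1$ we have $G(s) \neq \emptyset$ for every $s$, so $X_1\restrictto\bar z = X$ and $H$ may be regarded as a function $X \to Q_2$. The left-hand side $M, X \models \Br(Q_1,Q_2)\bar x\bar y\,\varphi$ unwinds to: there is $F : X \to \Br(Q_1,Q_2)$ with $M, X[F/\bar x\bar y] \models \varphi$, where by definition of $\Br$ each $F(s)$ contains a product $A_s \times B_s$ with $A_s \in Q_1$ and $B_s \in Q_2$.

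The key step is to compute the team produced on the right. Because $H$ does not depend on $\bar x$, for each base assignment $s \in X$ the extensions of $s$ inside $X_1[H/\bar y]$ range over $\bar a \in G(s)$ (from forming $X_1$) and, independently, over $\bar b \in H(s)$ (from applying $H$), giving
$$X_1[H/\bar y] = \set{ s[\bar a\bar b/\bar x\bar y] | s \in X, \langle\bar a,\bar b\rangle \in G(s)\times H(s) }.$$
This is exactly the shape of $X[F/\bar x\bar y]$ when $F(s)$ is the Cartesian product $G(s)\times H(s)$. That identity is the heart of the argument: it is what converts iteration-under-a-slash into the product appearing in the definition of $\Br$.

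From here the two directions are short. For right-to-left, given $G$ and $H$ put $F(s) = G(s)\times H(s)$; then $F(s) \in \Br(Q_1,Q_2)$ (witnessed by $A = G(s)$, $B = H(s)$), and $X[F/\bar x\bar y] = X_1[H/\bar y]$ by the identity above, so $M, X[F/\bar x\bar y]\models\varphi$ and the left-hand side holds. For left-to-right, given $F$ choose $A_s \in Q_1$, $B_s \in Q_2$ with $A_s\times B_s \subseteq F(s)$ and set $G(s) = A_s$, $H(s) = B_s$; then the product team $X_1[H/\bar y] = \set{ s[\bar a\bar b/\bar x\bar y] | s \in X, \langle\bar a,\bar b\rangle \in A_s\times B_s }$ is a subteam of $X[F/\bar x\bar y]$, so $M, X_1[H/\bar y]\models\varphi$ follows from $M, X[F/\bar x\bar y]\models\varphi$ by closure under subteams (Proposition \ref{prop:subteams}). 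I expect the main obstacle to be not either direction on its own but the careful bookkeeping behind the displayed identity: one must verify that restricting $X_1$ to $\bar z$ recovers $X$ (handling the degenerate case $\emptyset \in Q_1$ separately) and that the $\bar x$-independence of $H$ makes the fibre over each $s$ factor as $G(s)\times H(s)$ rather than as a general $\bar x$-indexed union---the latter being exactly what the unslashed iteration $Q_1\bar x\,Q_2\bar y$ would instead produce.
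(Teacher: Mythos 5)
Your proof is correct and takes essentially the same route as the paper's: from the slashed prefix to $\Br(Q_1,Q_2)$ you set $F(s)=G(s)\times H(s)$ and observe that the resulting teams coincide, and conversely you pick $A_s\times B_s\subseteq F(s)$ with $A_s\in Q_1$, $B_s\in Q_2$ and invoke closure under subteams (Proposition \ref{prop:subteams}). The only differences are cosmetic: you work with general types $\langle k\rangle,\langle l\rangle$ and make explicit the bookkeeping about $X_1\restrictto\bar z$ and the degenerate case $\emptyset\in Q_1$, whereas the paper restricts to $k=l=1$ and leaves those details implicit.
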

\begin{proof} To simplify notation we only prove this in the case that $k=l=1$.
Assume that $M,X \models \mathop\mathrm{Br}(Q_1,Q_2) xy \,\varphi$, i.e., there if $H: X \to \Br(Q_1,Q_2)$ such that
$M,X[H] \models \varphi$. For $s \in X$ define $F(s)=A$ and $G(s)=B$ where $A \in Q_1$ and $B \in Q_2$ are such that $A \times B \subseteq H(s)$. Now $X[F][G] \subseteq X[H]$ and since $\text{SBL}(Q_1,Q_2)$ is closed under taking subteams we have that
$X[F][G] \models \varphi$. The functions $F$ and $G$ witness that $M,X \models Q_1xQ_2y/x \, \varphi$.

On the other hand if there are such $F$ and $G$ witnessing that$M,X \models Q_1xQ_2y/x \, \varphi$  let $H(s) = F(s) \times G(s)$. Then $X[H]=X[F][G]$ and so $H$ witnesses that 
$M,X \models \mathop\mathrm{Br}(Q_1,Q_2) xy \,\varphi$.
\end{proof}

Thus, the slashed and backslashed quantifiers seem to have the intended meanings. One oddity arises with the universal quantifier:
In Dependence logic we have that $M \nmodels \forall x (\fd{}{x} \land \varphi)$ for every structure $M$ with at least two elements. However with the backslashed universal quantifier we have that $$M \models \forall x \backslash \epsilon \, \varphi \ekv \forall x \varphi.$$
Thus, the constructions $\exists x \backslash \epsilon$ and  $\exists x 
(\fd{}{x} \land \cdot)$ are equivalent, however the analogous constructions 
$\forall x \backslash \epsilon$ and  $\forall x (\fd{}{x} \land \cdot)$ are 
\emph{not} equivalent.

\subsection{Non-monotone quantifiers}

The truth condition for monotone quantifiers cannot be extended to 
non-monotone quantifiers as we have previously seen. However, with a slight 
twist the truth condition can actually be extended. 

In the Tarskian setting we say that $M \models Qx\varphi$ iff $\sem{\varphi} 
\in Q$. When $Q$ is monotone (increasing) this is equivalent to demanding that 
there is some set $A \subseteq \sem{\varphi}$ such that $A \in Q$, a fact we 
used for the truth conditions in the Hodges setting. When dealing with 
non-monotone quantifiers $Q$ we need, apart from that there is a set $A 
\subseteq \sem{\varphi}$ such that $A \in Q$, also that it is the 
\emph{largest} set satisfying $A \subseteq \sem{\varphi}$.

Translating this into the Hodges setting we need to say not only that 
$M,X[F/x] \models \varphi$ but also that  $F$ is a \emph{maximal} function for 
which this holds. However, there might not be a maximal $F$ such that $X[F/x]$ 
satisfies $\varphi$. Instead we demand that there should be one $F$ such that 
$X[F/x]$ satisfies  $\varphi$ and such that every larger $F$ such that 
$X[F/x]$ also satisfies $\varphi$ is mapping $X$ into $Q$. Let us try to 
formalize this in the following definition:

\begin{defin}\label{def:nonmono}
Given $F,F' : X \to \power(M)$ let $F \leq F'$ if for every $s \in X$: $F(s) \subseteq F'(s)$.
Let $Q$ be a type $\langle 1 \rangle$ quantifier. Then $M,X \models Qx\varphi$ 
if there is $F: X \to \power(M)$ such that \begin {enumerate}
\item $M,X[F/x] \models \varphi$ and 
\item for each $F' \geq F$ if $M,X[F'/x] \models \varphi$ then for all $s \in X$: $F(s) \in Q$.
\end{enumerate}
\end{defin}

We call the second condition on $F$ the \emph{largeness condition} since it 
forces the function $F$ to take large sets as values.

This definition generalizes to other types of quantifiers as in the case of 
monotone quantifiers. If $Q$ is of type   $\langle k \rangle$ then  $M,X 
\models Q\bar x \varphi$ iff there is $F: X \to \power(M^k)$ satisfying 
(slight variants of) the two conditions above.

This largeness condition we have added is quite similar to Sher's maximality 
principle for the branching of non-monotone quantifiers, see \cite{Sher:1990}. 
However, as we will see, the above condition gives rise to a slightly stronger 
notion of branching than the one proposed by Sher.

We can clearly add this largeness condition to slashed and backslashed 
non-monotone quantifiers:  $M,X \models Qx/\bar y \,\varphi$ if there is a 
witness $F:X \to \power(M)$ to $M,X \models Qx\varphi$ where $F$ is 
independent of the values of $\bar y$. Observe here that demanding $F$ to be a 
witness for the statement  $M,X \models Qx\, \varphi$ means that $F$ is large 
with respect to \emph{all} functions $F': X \to \power(M)$ and not only those 
that are determined by the values of $\bar y$.  We define  $M,X \models 
Qx\backslash \bar y \, \varphi$ in a similar manner.

\begin{prop}
For $Q$ monotone the  truth condition with the largeness condition in Definition \ref{def:nonmono} is equivalent to the old condition without the largeness condition.
\end{prop}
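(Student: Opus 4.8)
The plan is to verify the biconditional in both directions while keeping the \emph{same} witness function $F$ on each side, so that no construction is required; the entire content is the interaction between monotonicity of $Q$ and the quantification over larger functions $F'$ inside the largeness condition. The key observation I would isolate first is this: for a monotone $Q$, enlarging a set that already lies in $Q$ keeps it in $Q$, and this is exactly what renders the largeness condition vacuous once the witness maps $X$ into $Q$.

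For the direction from Definition \ref{qdef} to Definition \ref{def:nonmono}, I would start from a witness $F : X \to Q$ with $M, X[F/x] \models \varphi$. This is literally condition (1). For the largeness condition, take any $F' \geq F$ with $M, X[F'/x] \models \varphi$; then $F(s) \subseteq F'(s)$ for every $s \in X$, and since $F(s) \in Q$, monotonicity of $Q$ gives $F'(s) \in Q$ for all $s$. Thus the largeness requirement holds and $F$ witnesses satisfaction in the new sense.

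For the converse I would take a witness $F : X \to \power(M)$ for Definition \ref{def:nonmono}. Condition (1) is precisely $M, X[F/x] \models \varphi$. Applying the largeness condition to the instance $F' = F$---which is admissible because $F \geq F$ and $M, X[F/x] \models \varphi$ holds by (1)---forces $F(s) \in Q$ for all $s \in X$, i.e.\ $F : X \to Q$. Hence the very same $F$ witnesses satisfaction in the sense of Definition \ref{qdef}.

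The step I expect to be the only real subtlety, and the one I would flag as the crux, is parsing the quantifier structure of the largeness condition: it ranges over all larger functions $F'$ and insists each of them stay inside $Q$. For monotone $Q$ this insistence collapses the moment $F$ itself lands in $Q$, so largeness contributes nothing and the two conditions coincide; for non-monotone $Q$ this collapse fails, which is exactly why largeness is genuinely needed there. I would also remark that the argument never uses downward closure of satisfaction (Proposition 1) nor any property of $\varphi$---it turns purely on the behaviour of $Q$ under enlargement of the witness---so it transfers verbatim to any monotone quantifier of type $\langle k \rangle$ upon reading $\power(M)$ as $\power(M^k)$.
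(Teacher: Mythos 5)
Your proof is correct and follows essentially the same route as the paper's: monotonicity of $Q$ makes the largeness condition vacuous once the witness lands in $Q$, and instantiating the largeness condition at $F' = F$ recovers the old condition. The paper compresses all of this into a one-sentence ``obvious'' remark covering only the forward direction; your version just makes both directions explicit, which is a fair elaboration rather than a different argument.
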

\begin{proof}
Obvious since if there is $F: X \to Q$ and $F'\geq F$ then $F'(s) \in Q$ for all $s \in X$ by the monotonicity of $Q$.
\end{proof}

\begin{prop}
If $\varphi$ is an $L(Q)$-formula then 
$$ M,X \models \varphi \text{ iff  for all } s \in X: M,s \models \varphi.$$
\end{prop}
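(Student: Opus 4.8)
The plan is to argue by induction on the structure of the $L(Q)$-formula $\varphi$, following the same pattern as the proof of Proposition \ref{prop:4}(3); the only genuine change is that the quantifier $Q$ is now governed by the largeness condition of Definition \ref{def:nonmono} rather than by monotonicity. For the atomic cases $R(\bar t)$ and $\lnot R(\bar t)$ the claim is immediate, since their team truth conditions are literally ``$M,s\models\cdot$ for every $s\in X$''. For $\land$, $\lor$, $\exists y$ and $\forall y$ the inductive step is the standard flatness argument for first-order team semantics, identical to the corresponding cases of Proposition \ref{prop:4}(3); in particular the induction hypothesis tells us that for any team $Y$ of the appropriate domain, $M,Y\models\psi$ iff $M,s'\models\psi$ for every $s'\in Y$. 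Everything therefore reduces to the single new case $\varphi = Q\bar x\,\psi$.

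For that case I would first turn the induction hypothesis into a pointwise criterion. Write $\psi^{M,s}=\set{\bar a \mid M,s[\bar a/\bar x]\models\psi}$ for the Tarskian extension of $\psi$ relative to $s$. Applying the hypothesis to the team $X[F/\bar x]=\set{s[\bar a/\bar x] \mid s\in X,\ \bar a\in F(s)}$ shows that $M,X[F/\bar x]\models\psi$ holds exactly when $F(s)\subseteq\psi^{M,s}$ for all $s\in X$. Hence the pointwise-maximal map $\hat F$ defined by $\hat F(s)=\psi^{M,s}$ is the largest $F\colon X\to\power(M^k)$ with $M,X[\hat F/\bar x]\models\psi$, and any $F'\ge\hat F$ that still satisfies $\psi$ must coincide with $\hat F$.

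Both directions then follow from this remark. For the right-to-left direction, assume $M,s\models Q\bar x\,\psi$ for every $s\in X$, i.e. $\psi^{M,s}\in Q$ for all $s$; then $\hat F$ satisfies clause (1) of Definition \ref{def:nonmono}, and it satisfies the largeness clause (2) because, by maximality, the only $\psi$-satisfying $F'\ge\hat F$ is $\hat F$ itself, whose values $\hat F(s)=\psi^{M,s}$ lie in $Q$ by assumption; so $\hat F$ witnesses $M,X\models Q\bar x\,\psi$. For the left-to-right direction, let $F$ be any witness; by clause (1) and the pointwise criterion $F\le\hat F$ and $M,X[\hat F/\bar x]\models\psi$, so the largeness clause---which demands that every $\psi$-satisfying function above the witness map $X$ into $Q$---forces $\hat F(s)=\psi^{M,s}\in Q$ for all $s\in X$, which is precisely $M,s\models Q\bar x\,\psi$ for every $s$.

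The step I expect to be the main obstacle is this left-to-right direction: an arbitrary witness $F$ need not take its values in $Q$, so one cannot read $\psi^{M,s}\in Q$ off $F$ directly. The key is to invoke the largeness condition not on $F$ but on its maximal extension $\hat F=(s\mapsto\psi^{M,s})$, whose existence and maximality are exactly what the flatness induction hypothesis provides. This is where the non-monotone definition does its real work, and where one must be careful that the conclusion is $\psi^{M,s}\in Q$ for the full extension $\psi^{M,s}$, rather than merely ``some subset of $\psi^{M,s}$ lies in $Q$'', the weaker statement that the largeness condition is designed to rule out.
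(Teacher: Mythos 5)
Your proof is correct and takes essentially the same route as the paper's: induction in which the quantifier case is the only nontrivial step, with the maximal function $\hat F(s)=\psi^{M,s}$ serving both as the witness in the right-to-left direction and as the function to which the largeness condition is applied in the left-to-right direction. The only difference is presentational: you spell out the step the paper leaves implicit, namely that largeness must be invoked on the maximal extension $\hat F$ (which satisfies $\psi$ by the flatness induction hypothesis), not on the given witness $F$ itself.
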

\begin{proof}

	The proof is by induction. We only need to check the induction step for the 
	quantifier $Q$. Assume $M,X \models Qx\varphi$. Let $F$ be such that $X[F/x] 
	\models \varphi$ and satisfying the largeness condition, and let $s \in X$.  
	By the induction hypothesis we have that for all $a \in F(s)$: $M,s[a/x] 
	\models \varphi$. Therefore $F(s) \subseteq \sem{\varphi}_{M,s}$ and by the 
	largeness condition on $F$ we know that $\sem{\varphi}_{M,s} \in Q_M$ and 
	therefore $M,s \models Qx\varphi$.

On the other hand if for all $s \in X: M,s \models \varphi$, then we let $F(s) 
= \sem{\varphi}_{M,s}$. It is clear that $F(s) \in Q_M$ for all $s \in X$ and 
also that there cannot be any $F' > F$ such that $M,X[F'/x] \models \varphi$ 
since that would violate the definition of $F$. Therefore $M,X \models 
Qx\varphi$.
\end{proof}

\begin{defin}[Sher \cite{Sher:1990}]
A cartesian product $A \times B$ is \emph{maximal in $R$} if $A \times B \subseteq R$, no $A' \supsetneq A$ satisfies $A'\times B \subseteq R$ and no $B' \supsetneq B$ satisfies $A\times B' \subseteq R$. 

Let the branching of two type $\langle 1\rangle$ quantifiers $Q_1$ and $Q_2$, $\Br^S(Q_1,Q_2)$ be the type $\langle 2\rangle$ quantifier
$$\set{(M,R) | R \subseteq M^2, \exists A\mathord\in Q_1 \exists B \mathord \in Q_2: A \times B \text{ is maximal in } R}. $$
\end{defin}

\begin{lemma}
If $R \notin \Br^S(Q_1,Q_2)$ then for every $A \in Q_1$ and $B \in Q_2$ if $A \times B \subseteq R$  there is either  
\begin{enumerate} 
\item $A' \supseteq A$ such that $A' \notin Q_1$ and $A' \times B \subseteq R$, or
\item $B' \supseteq B$ such that $B' \notin Q_2$ and $A \times B' \subseteq R$.
\end{enumerate}
\end{lemma}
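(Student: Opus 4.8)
The plan is to argue by contradiction. Fix $A \in Q_1$ and $B \in Q_2$ with $A \times B \subseteq R$, and suppose that \emph{both} alternatives fail. The negations read: every $A' \supseteq A$ with $A' \times B \subseteq R$ lies in $Q_1$, and every $B' \supseteq B$ with $A \times B' \subseteq R$ lies in $Q_2$. From $A$ and $B$ I want to manufacture a \emph{maximal} product inside $R$ whose two sides still belong to $Q_1$ and $Q_2$; this would put $R$ into $\Br^S(Q_1,Q_2)$ and contradict the hypothesis.

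The natural candidates are the two ``saturations''. First set $A^* = \set{a \in M | \forall b \in B\colon (a,b) \in R}$, the largest set whose product with $B$ stays inside $R$. Clearly $A \subseteq A^*$ and $A^* \times B \subseteq R$, so the first negated alternative forces $A^* \in Q_1$. Next set $B^* = \set{b \in M | \forall a \in A^*\colon (a,b) \in R}$, the largest set whose product with $A^*$ stays inside $R$. Since $A^* \times B \subseteq R$ we have $B \subseteq B^*$, and since $A \subseteq A^*$ we get $A \times B^* \subseteq R$; so the second negated alternative forces $B^* \in Q_2$.

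It remains to verify that $A^* \times B^*$ is maximal in $R$. Containment $A^* \times B^* \subseteq R$ holds by the definition of $B^*$. No $B' \supsetneq B^*$ can satisfy $A^* \times B' \subseteq R$, directly by the maximality built into $B^*$. The one slightly delicate point — the step I expect to be the main obstacle — is the other side: ruling out an $A' \supsetneq A^*$ with $A' \times B^* \subseteq R$. Here I would exploit $B \subseteq B^*$: such an $A'$ would in particular satisfy $A' \times B \subseteq R$ with $A' \supsetneq A^*$, contradicting the maximality of $A^*$ as the largest set whose product with $B$ lies in $R$. Hence $A^* \times B^*$ is maximal in $R$ with $A^* \in Q_1$ and $B^* \in Q_2$, so $R \in \Br^S(Q_1,Q_2)$, contrary to assumption. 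Therefore at least one of the two alternatives must hold.
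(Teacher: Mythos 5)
Your proof is correct and takes essentially the same route as the paper: the paper also argues by contradiction, forming $A_0$ (the union of all $A' \supseteq A$ with $A' \times B \subseteq R$, which is exactly your saturation $A^*$) and then $B_0$ (the saturation of $B$ against $A_0$, your $B^*$), placing both in $Q_1$ and $Q_2$ via the negated alternatives and concluding that $A_0 \times B_0$ is maximal in $R$. Your write-up is in fact slightly more careful, since the paper dismisses maximality ``by construction'' while you spell out the one delicate point, that $B \subseteq B^*$ transfers maximality of $A^*$ against $B$ to maximality against $B^*$.
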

\begin{proof}
	Suppose not and let $A_0$ be the union of all $A' \supseteq A$ such that $A' 
	\times B \subseteq R$. Then $A_0 \times B \subseteq R$, and by the 
	assumption $A_0 \in Q_1$.  Let $B_0$ be the union of all $B' \supseteq B$ 
	such that $A_0 \times B' \subseteq R$. Then $A_0 \times B_0 \subseteq R$ and 
	thus $A \times B_0 \subseteq R$ and by the assumption $B_0 \in Q_2$.

$A_0 \times B_0$ is maximal in $R$ by construction, hence $R \in \Br^S(Q_1,Q_2)$ contradicting the assumption.
\end{proof}

\begin{prop}\label{prop:sher}
Suppose that $\varphi$ is such that $\sem{\varphi}$ is closed downwards ($\varphi$ could for example be a formula of $L(Q)$ or of Dependence logic).
If $M,X \models Q_1x Q_2y / x \varphi$ then $M,X \models \Br^S(Q_1,Q_2)xy \varphi$. 
\end{prop}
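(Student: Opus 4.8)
The plan is to reduce to $k=l=1$ (the general case is notationally heavier but identical), unpack the two witnesses supplied by the hypothesis, and repackage them into a single witness for $\Br^S(Q_1,Q_2)$ whose largeness I then verify through the preceding Lemma. Assume $M,X\models Q_1xQ_2y/x\,\varphi$. The outer quantifier yields a function $F:X\to\power(M)$, large against all competing functions, with $M,X[F/x]\models Q_2y/x\,\varphi$; the inner slashed quantifier then yields an $x$-independent, large $G$ on $X[F/x]$ with $M,X[F/x][G/y]\models\varphi$, and I write $G_s$ for the common value of $G$ on the assignments $s[a/x]$ ($a\in F(s)$). Applying the largeness clause of Definition~\ref{def:nonmono} to $F$ and to $G$ at the trivial enlargement already gives $F(s)\in Q_1$ and $G_s\in Q_2$ for every $s\in X$. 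I set $H(s)=F(s)\times G_s$, a function $X\to\power(M^{2})$. Since $X[H/xy]=X[F/x][G/y]$, clause~(1) holds: $M,X[H/xy]\models\varphi$.

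It remains to check the largeness clause for $H$. Fix $H'\geq H$ with $M,X[H'/xy]\models\varphi$, fix $s_0\in X$, and put $R=H'(s_0)$; suppose toward a contradiction that $R\notin\Br^S(Q_1,Q_2)$. As $F(s_0)\in Q_1$, $G_{s_0}\in Q_2$ and $F(s_0)\times G_{s_0}=H(s_0)\subseteq R$, the Lemma gives one of two cases. In the easy case~(2) there is $B'\supseteq G_{s_0}$ with $B'\notin Q_2$ and $F(s_0)\times B'\subseteq R$. I enlarge the inner witness on the $s_0$-block alone: $G'(s_0[a/x])=B'$ for $a\in F(s_0)$ and $G'=G$ otherwise, which keeps $G'$ $x$-independent with $G'\geq G$. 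Because $F(s_0)\times B'\subseteq R$ at $s_0$ and $H(s)\subseteq H'(s)$ elsewhere, $X[F/x][G'/y]$ is a subteam of $X[H'/xy]$, so downward closure of $\sem\varphi$ yields $M,X[F/x][G'/y]\models\varphi$. The largeness of $G$ now forces $B'\in Q_2$, a contradiction.

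The difficulty is case~(1): there is $A'\supseteq F(s_0)$ with $A'\notin Q_1$ and $A'\times G_{s_0}\subseteq R$. The natural move is to put $F'(s_0)=A'$, $F'=F$ elsewhere, and to contradict the largeness of $F$ (which would force $A'=F'(s_0)\in Q_1$) --- but this requires first showing $M,X[F'/x]\models Q_2y/x\,\varphi$, and \emph{that} is the main obstacle. Enlarging $F$ introduces new assignments $s_0[a/x]$ with $a\in A'\setminus F(s_0)$, lying outside $X[F/x]$; a witness for the inner slashed quantifier over $X[F'/x]$ must be $x$-independent yet large against \emph{all} competing functions, including $x$-dependent ones. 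The obvious candidate --- extend $G$ by the value $G_{s_0}$, or by the maximal set $\{b: A'\times\{b\}\subseteq R\}$, on the new block --- does satisfy $\varphi$ by downward closure, but its largeness cannot be certified by restricting to $X[F/x]$: an $x$-dependent enlargement over the new assignments need not restrict to an admissible competitor there, so the largeness of $G$ does not transport to the new block. I expect the resolution to hinge on the maximality of the rectangle that case~(1) is really probing, together with a closure property of $\sem\varphi$ stronger than downward closure (such as preservation under unions of $\leq$-chains, which is available for dependence atoms and Dependence-logic formulas), used to manufacture a genuinely large $x$-independent inner witness on $X[F'/x]$; this is the step I would spend the most care on.
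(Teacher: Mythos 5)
Your construction and case analysis are exactly the paper's: the same witness $H(s)=F(s)\times G(s)$, the same appeal to the preceding Lemma (after noting, as the paper implicitly does, that largeness at the trivial enlargement gives $F(s)\in Q_1$ and $G(s)\in Q_2$), and your treatment of the case $B'\supseteq G_{s_0}$, $B'\notin Q_2$ is word-for-word the paper's argument. The gap is that you never finish the other case, so as it stands this is not a proof. For the record, the paper closes that case by precisely the move you examined and rejected: it takes the $x$-independent extension of $G$ to $X[F'/x]$ as the inner witness, gets satisfaction from $X[F'/x][G/y]\subseteq X[H'/xy]$ and downward closure, and certifies largeness by restricting an arbitrary competitor $G'\geq G$ on $X[F'/x]$ to $X[F/x]$, invoking the largeness of $G$ there, and concluding that $G'(s)\in Q_2$ ``for all $s\in X$''.

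Your reason for rejecting that move is, however, sound, and this is the more important finding. Since the paper stipulates that a slashed-quantifier witness must be large against \emph{all} competitors, not only the $x$-independent ones, the restriction argument controls $G'$ only on the old assignments and says nothing about $G'(s_0[a/x])$ for $a\in A\setminus F(s_0)$ --- exactly the new block you point to. Moreover, no strengthened closure property of $\sem{\varphi}$ can repair this, because under the stated definitions (reading the largeness clause of Definition~\ref{def:nonmono} as ``$F'(s)\in Q$'', which is what all the paper's proofs use) the proposition itself fails. Take $M=\set{0,1}$ with both elements named, $X=\set{\epsilon}$, $Q_1=Q_2=\exists^{=1}$, and $\varphi$ the flat first-order formula $\lnot\, x\mathord=1\lor\lnot\, y\mathord= 1$, so that $\sem{\varphi}$ is the set of teams contained in $R_0=\set{(0,0),(0,1),(1,0)}$. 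Then $F(\epsilon)=\set{1}$ and $G\equiv\set{0}$ witness $M,X\models \exists^{=1}x\,\exists^{=1}y/x\;\varphi$: $G$ is large because the row $x=1$ of $R_0$ is exactly $\set{0}$, and $F$ is large because over $[\set{0,1}/x]$ the only $x$-independent candidates, the constants $\set{0}$ and $\emptyset$, both fail largeness --- the first against the $x$-dependent competitor taking $\set{0,1}$ at $x=0$ and $\set{0}$ at $x=1$ (whose induced team is $R_0$ itself), the second against itself since $\emptyset\notin\exists^{=1}$. Yet $M,X\nmodels \Br^S(\exists^{=1},\exists^{=1})xy\,\varphi$: whatever $R\subseteq R_0$ is chosen as witness, $R_0$ is an admissible competitor, and $R_0\notin\Br^S(\exists^{=1},\exists^{=1})$ because its only maximal rectangles are $\set{0}\times\set{0,1}$ and $\set{0,1}\times\set{0}$, neither a product of singletons. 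So the statement is provable only under a weaker reading of largeness for slashed quantifiers --- for instance, quantifying competitors over $x$-independent functions only --- and under that reading both the paper's restriction argument and your ``obvious candidate'' close the problematic case immediately, with no extra hypotheses on $\sem{\varphi}$ needed.
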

\begin{proof}
Assume that $M,X \models Q_1x Q_2y / x \varphi$, i.e., that there are $F,G: X 
\to \power(M)$ satisfying the relevant largeness condition. To prove that $M,X 
\models \Br^S(Q_1,Q_2)xy \varphi$ we need to find an $H: X \to \power(M^2)$ 
witnessing the truth condition. Let $H(s)=F(s) \times G(s)$. We need to prove 
that (1) $M,X[H/xy] \models \varphi$ and (2) that for any $H' \geq H$ such 
that  $M,X[H'/xy] \models \varphi$ we have that $H'(s) \in \Br^S(Q_1,Q_2)$.

(1) Since $X[H/xy]=X[F/x][G/y]$;  $M,X[H/xy] \models \varphi$ follows from the assumption that $$M,X[F/x][G/y] \models \varphi.$$

(2) Assume that $H' \geq H$, $M,X[H'/xy] \models \varphi$ and $H'(s_0) \notin \Br^S(Q_1,Q_2)$ for some $s_0 \in X$. By the lemma we either have $A\notin Q_1$ such that  
$$F(s_0) \times G(s_0) \subseteq A \times G(s_0) \subseteq H'(s_0)$$ or $B \notin Q_2$ such that 
$$F(s_0) \times G(s_0) \subseteq F(s_0) \times B \subseteq H'(s_0).$$
First assume that we have such a $B$. Then $G$ does not satisfy the largeness condition, i.e., that for every $G' \geq G$ if $M,X[F/x][G'/y] \models \varphi$ then $G'(s) \in Q_2$ for all $s \in X$, this is because we could define $G'$ as $G$ except that $G'(s_0)=B$.

On the other hand suppose we have such an $A$ and let $F'$ be as $F$ except that $F'(s)=A$. Then $F' \geq F$ and if we prove that $M,X[F'/x] \models Q_2 y/x\, \varphi$ that contradicts the largeness of $F$ since $F'(s_0) \notin Q_1$. Since $X[F'/x][G/y] \subseteq X[H'/xy]$ and $M,X[H'/xy] \models \varphi$ we have that $M,X[F'/x][G/y] \models \varphi$. We also need to prove that $G$ satisfies the largeness condition. Let $G' \geq G$ be such that $M,X[F'/x][G'/y] \models \varphi$, then $M,[F/x][G'/y] \models \varphi$ and since $G$ satisfies the largeness condition for $X[F/x]$ we know that $G'(s) \in Q_2$ for all $s \in X$, proving the largeness condition for $G$ with $X[F'/x]$.
\end{proof}

We leave the question as wether the proposition holds for general formulas 
$\varphi$ open.

\begin{que}
Is Proposition \ref{prop:sher} also true for formulas $\varphi$ such that 
$\sem{\varphi}$ is not a down set?
\end{que}

The implication in the other direction is in general false as the following example shows. Let 
$$R=\set{\langle 0,0\rangle} \cup \bigl(\set{0,1}\times \set{1,2}\bigr)$$ where $0,1,2 \in M$, see Figure \ref{fig:one}.
Then $$(M,R) \models \Br^S(\exists^{=1},\exists) xy \, R(x,y)$$ since $\set{0} \times \set{0,1,2}$ is maximal in $R$ and $\set{0} $ is in  $\exists^{=1}$ and $\set{0,1,2}$ is in $\exists$. 
However, $$(M,R) \nmodels \exists^{=1}x \exists y/x \, R(x,y)$$ since to get $(M,R),A\times B \models R(x,y)$, $A \in \exists^{=1}$ and $B \neq \emptyset$, we are forced to choose $A=\set{0}$ or $A=\set{1}$. But then the largeness condition for $A$ is not satisfied.

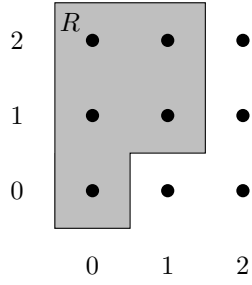
\begin{figure}
\begin{center}
\begin{tikzpicture}[inner sep=0pt]
\fill [lightgray] (.5,.5) rectangle (1.5,1.5);
\fill [lightgray] (.5,1.5) rectangle (2.5,3.5);
\node at (.7,3.25) {$R$};
\foreach \x in {0,1,2} {
           \node at (\x+1,0) {\x};
            \foreach \y in {1,2,3} {
                \node at (\x+1,\y) [circle,fill=black,minimum size=5pt] {};
}
}
\foreach \y in {0,1,2} {
           \node at (0,\y+1) {\y};}
\draw (.5,.5) -- (1.5,.5) -- (1.5,1.5) -- (2.5,1.5) -- (2.5,3.5) -- (.5,3.5) 
-- cycle;
\end{tikzpicture}
\end{center}
\caption{Example of a relation $R$ satisfying $\Br^S(\exists^{=1},\exists)xy$ 
	and $\exists y\exists^{=1} x/y$ but not $\exists^{=1}x\exists y/x$.  
}\label{fig:one}
\end{figure}

This example also shows that in general the two quantifier prefixes $Q_1x 
Q_2y/x$ and $Q_2y Q_1x/y$ are not equivalent:
	$$ (M,R) \nmodels \exists^{=1}x \exists y/x \, R(x,y)$$
but
$$(M,R) \models \exists y \exists^{=1} x/y \, R(x,y).$$

We end this section with two open questions regarding this pathology.

\begin{que}
	Are there natural conditions under which the prefixes $Q_1x Q_2y/x$ and 
	$Q_2y Q_1x/y$ are equivalent?
\end{que}

\begin{que}
	Are there other truth conditions for non-monotone quantifiers (and slashed 
	versions) such that \begin{enumerate}
		\item for monotone quantifiers the truth conditions coincide with the ones 
			for the monotone case, \item for formulas of $L(Q)$ we have $M,X \models 
			\varphi$ iff for all $s \in X$: $M,s\models \varphi$, and
		\item the prefixes  $Q_1x Q_2y/x$ and $Q_2y Q_1x/y$ are equivalent?
	\end{enumerate}
\end{que}

Our proposed truth conditions satisfy (1) and (2), but not (3).

\section{Dependence as an atom}\label{depatom}

Let us now investigate the possibility of defining a new
dependence atom $\da{\bar x}{y}$ giving the intended meaning $Qy \backslash 
\bar x \, \varphi$ to expressions of the form
$Qy (\da{\bar x}{ y} \land \varphi)$.
First we observe that there is no way of doing this if we want to
keep the property of the logic being closed under taking subteams. The
following argument shows this.

Assume that $\da{x}{y}$ is an atom closed under subteams satisfying that
$$\models \forall x \exists^{\geq 3} y \bigl(\da{x}{y} \land R(x,y)\bigr)\ \leftrightarrow\ 
 \forall x \exists^{\geq 3} y\backslash \epsilon \  R(x,y).$$
Fix $M = \set{0,1,2}$, then $$(M,M^2) \models  \forall x \exists^{\geq 3} 
y\backslash \epsilon \  R(x,y).$$ Thus, $X=[M^2/x,y]$ has to  satisfy $\da{x}{y}$. 
By the downward closure of $D$,  we have that the team  
\begin{equation}\label{team}
X=[S/x,y],\text{ where }S=(\set{0,1} \times 
\set{0,1}) \cup (\set{2} \times \set{1,2}),
\end{equation}
satisfies the atom $D$
and thus that $$(M,S) \models \forall x \exists^{\geq 2} y (\da{x}{y} \land R(x,y)),$$ however 
$$(M,S) \not\models   \forall x \exists^{\geq 2} y \backslash
\epsilon \  R(x,y).$$
 This argument shows that no atom $\da{x}{y}$
closed under taking subteams can have the intended effect on both
the quantifiers $\exists^{\geq 2}$ and $\exists^{\geq 3}$.
 
However, by abandoning the property that truth is closed under subteams we can 
define an atom satisfying the right equivalences. To get the mind on the right 
track let us go back and take a look at the formula $Q_1x\,Q_2y\,\varphi$ and 
its translation into existential second order logic with $Q_1$ and $Q_2$ as 
second order predicates:
$$
\exists X \bigl( Q_1(X) \land \forall x \mathord\in X \exists Y \bigl(Q_2(Y) \land \forall y \mathord\in Y \varphi\bigr)\bigr).
$$
In this translation it is clear that the variable $Y$ depends on the variable $x$. Thus a quantifier prefix like $Q_1x\,Q_2y$ gives rise to a dependence in which the value of $x$ determines the \emph{set} $Y$ of possible values for $y$. The new dependence atom, which we denote by 
$\mvd{x_1\ldots,x_k}{x_{k+1}}$, tries to capture this type of dependence in which 			
the \emph{set of possible values} of $x_{k+1}$ is determined by the
values of the variables $x_1,\ldots,x_k$. 
We formalize this idea, but first we need a little bit of notation to work with.

Let $X_s^{\bar y}$, for $s \subseteq s' \in X$ and $\bar y \in \dom(X)$, be the set of possible values of $\bar y$ given $s$, in other words:

\begin{defin}
Given a team $X$, variables $\bar y \in \dom(X)$ and $s \subseteq s'$ for some $s' \in X$, let
$$ X_s^{\bar y} = \set{s'(\bar y) | \exists s' \in X, s \subseteq s' }.$$
\end{defin}

As an example let $X$ be as in \eqref{team} and $s: x \mapsto 1$ then $X^y_s= \set{0,1}$ and if $s': x \mapsto 2$ then $X^y_{s'}=\set{1,2}$.

\begin{defin}
Assume $\bar x,\bar y \in \dom(X)$, then
\begin{itemize}
\item $M ,X \models  \mvd{\bar x}{\bar y}$ iff  for all 
$s\in X$, $X^{\bar y}_{s \restrictto \bar x}= X^{\bar y}_{s \restrictto \bar x\bar z}$, where $\bar z=\dom(X) \setminus \set{\bar x,\bar y}$. 
\item $M ,X \models  \lnot \mvd{\bar x}{\bar y}$ iff $X = \emptyset$.
\end{itemize}
\end{defin}

In fact, this is functional dependence for set-valued functions: Assume that we want to check whether $M ,X \models  \mvd{\bar x}{\bar y}$ or not.
Let $F$ map $s \in X \restrictto \bar x,\bar z$ to the set of possible values of $\bar y$, $X^{\bar y}_s$. Then $M,X\models \mvd{\bar x}{\bar y}$ iff $F(s)$ is determined by the values $s(\bar x)$.

Next we give an equivalent definition for $ \mvd{\bar x}{\bar y}$ this time as a first order property of $X$.
\begin{prop}\label{truthcondition}
 $M ,X \models  \mvd{\bar x}{\bar y}$ iff 
$$
\forall s,s' \mathord\in X  \Bigl( s(\bar x)= s'(\bar x)  \imp  \exists s_0 \mathord\in X 
\bigl(s_0(\bar x,\bar y)=s(\bar x,\bar y) \land s_0(\bar z) = s'(\bar z) \bigr)
\Bigr),
$$
where $\set{\bar z}=\dom(X) \setminus \set{\bar x,\bar y}$. 
\end{prop}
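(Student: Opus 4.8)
The plan is to unwind both definitions and verify the biconditional by exhibiting the witnessing assignments explicitly. The key preliminary observation is that $s\restrictto\bar x\bar z$ extends $s\restrictto\bar x$, so fewer assignments of $X$ agree with $s$ on $\bar x\bar z$ than on $\bar x$ alone; hence $X^{\bar y}_{s\restrictto\bar x\bar z} \subseteq X^{\bar y}_{s\restrictto\bar x}$ always holds. Consequently the semantic clause $X^{\bar y}_{s\restrictto\bar x} = X^{\bar y}_{s\restrictto\bar x\bar z}$ carries content only in the reverse inclusion $X^{\bar y}_{s\restrictto\bar x} \subseteq X^{\bar y}_{s\restrictto\bar x\bar z}$, and it is precisely this inclusion that the displayed first-order statement is built to encode.

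For the direction from the defining clause of $\mvd{\bar x}{\bar y}$ to the first-order statement, I would fix $s,s' \in X$ with $s(\bar x)=s'(\bar x)$ and produce the required $s_0$. Since $s$ itself witnesses $s(\bar y) \in X^{\bar y}_{s\restrictto\bar x}$, and since $s(\bar x)=s'(\bar x)$ makes the sets $X^{\bar y}_{s\restrictto\bar x}$ and $X^{\bar y}_{s'\restrictto\bar x}$ literally equal, the semantic clause applied to $s'$ places $s(\bar y)$ in $X^{\bar y}_{s'\restrictto\bar x\bar z}$. By the definition of that set there is some $s_0 \in X$ agreeing with $s'$ on $\bar x\bar z$ and with $s_0(\bar y)=s(\bar y)$; using $s(\bar x)=s'(\bar x)$ once more, this $s_0$ satisfies $s_0(\bar x,\bar y)=s(\bar x,\bar y)$ and $s_0(\bar z)=s'(\bar z)$, exactly as demanded.

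For the converse I would fix $s \in X$ and establish the nontrivial inclusion $X^{\bar y}_{s\restrictto\bar x} \subseteq X^{\bar y}_{s\restrictto\bar x\bar z}$. Given $b \in X^{\bar y}_{s\restrictto\bar x}$ there is some $s'' \in X$ with $s''(\bar x)=s(\bar x)$ and $s''(\bar y)=b$. Applying the first-order statement with $s''$ in the role of $s$ and $s$ in the role of $s'$ (legitimate because $s''(\bar x)=s(\bar x)$) yields $s_0 \in X$ with $s_0(\bar x,\bar y)=s''(\bar x,\bar y)$ and $s_0(\bar z)=s(\bar z)$. Then $s_0$ agrees with $s$ on $\bar x\bar z$ and has $s_0(\bar y)=b$, so $b \in X^{\bar y}_{s\restrictto\bar x\bar z}$, which together with the trivial inclusion gives the equality for every $s$.

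The argument has no genuine difficulty, and the proof is essentially a careful translation. The only place that requires attention, and the natural place to slip, is the bookkeeping of which of $s$, $s'$, $s''$, $s_0$ plays which role and on which blocks of variables the restrictions are made to agree; in particular one must use the hypothesis $s(\bar x)=s'(\bar x)$ to compute $X^{\bar y}_{s\restrictto\bar x}$ interchangeably from $s$ or $s'$, and one should record the trivial inclusion at the outset so that the hypotheses are invoked for only one of the two inclusions.
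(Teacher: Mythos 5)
Your proof is correct and takes essentially the same route as the paper's: in one direction you obtain $s_0$ as a witness of $s(\bar y) \in X^{\bar y}_{s'\restrictto \bar x\bar z}$, and in the other you apply the displayed first-order condition with the roles of the two assignments swapped to get the nontrivial inclusion $X^{\bar y}_{s\restrictto \bar x} \subseteq X^{\bar y}_{s\restrictto \bar x\bar z}$, exactly as the paper does. The only cosmetic difference is bookkeeping: you use the defining clause once at $s'$ together with the identity $s\restrictto\bar x = s'\restrictto\bar x$, whereas the paper first records $X^{\bar y}_{s\restrictto \bar x\bar z} = X^{\bar y}_{s'\restrictto \bar x\bar z}$; these are the same argument.
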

\begin{proof}
Assume $M ,X \models  \mvd{\bar x}{\bar y}$ and $s,s'\in X$ such that $s(\bar x)=s'(\bar x)$. 
Then $X^{\bar y}_{s\restrictto \bar x \bar z} =  X^{\bar y}_{s'\restrictto \bar x \bar z}$.
Clearly $s(\bar y) \in X^{\bar y}_{s\restrictto \bar x \bar z} $ and thus $s(\bar y) \in X^{\bar y}_{s'\restrictto \bar x \bar z} $ which means that there is $s_0 \in X$ such that  $s_0 \supseteq s'\restrictto \bar x \bar z$ and $s_0(\bar y) = s(\bar y)$, i.e., that 
$s_0(\bar x,\bar z)=s'(\bar x,\bar z)$ and $s_0(\bar y)=s(\bar y)$.

For the other implication let $s \in X$, we show that  $X^{\bar y}_{s \restrictto \bar x}= X^{\bar y}_{s \restrictto \bar x,\bar z}$.
It should be clear that  $X^{\bar y}_{s \restrictto \bar x} \supseteq X^{\bar y}_{s \restrictto \bar x,\bar z}$, so let $\bar a \in X^{\bar y}_{s \restrictto \bar x} $. Then there is $s' \in X$ such that $s' \supseteq s \restrictto \bar x$ and $s'(\bar y) = \bar a$. 
By assumption there is a $s_0 \in X$ such that $s_0(\bar x,\bar y)=s'(\bar x, \bar y)$ and $s_0(\bar z)=s(\bar z)$, or in other words $s_0 \supseteq s \restrictto \bar x,\bar z$ and $s_0(\bar y)=\bar a$, i.e., that $\bar a \in  X^{\bar y}_{s \restrictto \bar x,\bar z}$.
\end{proof}

The dependence relation $\mvd{\bar x}{\bar y}$ is what database theorists call \emph{multivalued dependence}, see \cite{Beeri:1977}.

By some easy calculations we find that any $X$ satisfying both $\mvd{\bar x}{y}$ and $\mvd{\bar x,y}{z}$ also satisfies $\mvd{\bar x}{y,z}$, i.e.,
$$\mvd{\bar x}{y}, \mvd{\bar x,y}{z} \models \mvd{\bar x}{y,z}.\footnote{Here, by $\Gamma \models \varphi$ we mean that for every model $M$ and team $X$,  whose domain includes at least all free variable of $\Gamma$ and $\varphi$, if $M,X \models \gamma$ for all $\gamma \in \Gamma$ then also $M,X \models \varphi$.}$$
However it is not in general the case that an $X$ satisfying  $\mvd{\bar x}{y,z}$ satisfies $\mvd{\bar x}{y}$, cf., the case of 
functional dependence where $\fd{\bar x}{y}\land \fd{\bar x}{z} $ is equivalent to $\fd{\bar x}{y,z}$. 
Here by $M ,X \models  \fd{\bar x}{\bar y}$ we mean 
$$\forall s,s' \in X \bigr(s(\bar x)=s'(\bar x) \imp s(\bar y)=s'(\bar y)\bigl).$$

It should also be noted that in the case of functional dependence the dependence atom is \emph{not dependent on context} in the sense that 
$$M ,X \models  \fd{\bar x}{\bar y} \text{ iff } M ,{X\restrictto\bar x,\bar y} \models \fd{\bar x}{\bar y},$$
where $X\restrictto\bar x$ is the team $\set{s \restrictto \bar x | s \in X}$. However multivalued dependencies are dependent on context as the following easy examples shows.
$M ,{X\restrictto x} \models \mvd{}{x}$ is always true disregarding what $X$ is. On the other hand if $s(x)=s(y)=0$ and $s'(x)=s'(y)=1$ then $M, {\set{s,s'}}\nmodels \mvd{}{x}$.

There is a close connection between lossless decomposition of data\-bases and multivalued dependencies: Let $X \bowtie Y$ be the \emph{natural join} of the teams $X$ and $Y$, i.e.,
$$
X\bowtie Y = \set{s: \dom(X) \cup \dom (Y) \to M\ | s\restrictto \dom(X) \in X \text{ and }  s\restrictto\dom(Y)  \in Y}.
$$

\begin{prop}[\cite{Fagin:1977}]\label{fagin77}
$$X \models \mvd{\bar x}{\bar y}\text{ iff } X= (X \restrictto \bar x\bar y) \bowtie (X \restrictto \bar x \bar z),$$ 
where $\bar z$ is $\dom(X) \setminus \set{\bar x,\bar y}$.
\end{prop}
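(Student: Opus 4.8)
The plan is to reduce the asserted equality of teams to a single inclusion and then recognize that inclusion as exactly the first-order truth condition from Proposition \ref{truthcondition}. Throughout I write $J = (X \restrictto \bar x\bar y) \bowtie (X \restrictto \bar x\bar z)$, and I record at the outset that $\dom(X) = \set{\bar x,\bar y,\bar z}$ splits into three disjoint blocks, so that an assignment on $\dom(X)$ is uniquely determined by its values on $\bar x$, on $\bar y$ and on $\bar z$ separately.

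First I would observe that $X \subseteq J$ holds unconditionally: if $s \in X$ then $s\restrictto\bar x\bar y \in X\restrictto\bar x\bar y$ and $s\restrictto\bar x\bar z \in X\restrictto\bar x\bar z$, so $s$ already meets the defining condition of the natural join. Hence $X = J$ is equivalent to the reverse inclusion $J \subseteq X$, and it remains only to show that $J \subseteq X$ is equivalent to $M,X \models \mvd{\bar x}{\bar y}$.

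Next I would unwind the natural join. By definition $s \in J$ iff both projections $s\restrictto\bar x\bar y$ and $s\restrictto\bar x\bar z$ come from members of $X$, that is, iff there are $t,t' \in X$ with $t(\bar x) = t'(\bar x) = s(\bar x)$, $t(\bar y) = s(\bar y)$ and $t'(\bar z) = s(\bar z)$. In other words, the elements of $J$ are precisely the \emph{recombinations} obtained from a pair $t,t' \in X$ that agree on $\bar x$, taking the $\bar x\bar y$-part from $t$ and the $\bar z$-part from $t'$; by the partition remark such a recombination is the unique assignment $s$ with $s(\bar x,\bar y) = t(\bar x,\bar y)$ and $s(\bar z) = t'(\bar z)$.

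Finally I would match this against the truth condition. The inclusion $J \subseteq X$ says exactly that for every pair $t,t' \in X$ with $t(\bar x) = t'(\bar x)$ the corresponding recombination lies in $X$, i.e. there is $s_0 \in X$ with $s_0(\bar x,\bar y) = t(\bar x,\bar y)$ and $s_0(\bar z) = t'(\bar z)$. This is verbatim the characterization of $\mvd{\bar x}{\bar y}$ furnished by Proposition \ref{truthcondition}, so $J \subseteq X$ holds iff $M,X \models \mvd{\bar x}{\bar y}$, which together with the automatic inclusion $X \subseteq J$ gives the result. There is no real obstacle here; the only point needing care is the uniqueness of the recombination, which is what makes the existential ``there is $s_0 \in X$'' in the truth condition coincide with the plain membership ``$s \in X$'' read off from the join, and the remark that $X \subseteq J$ is free so that only the one direction carries content.
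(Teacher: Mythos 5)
Your proof is correct, and there is nothing in the paper to compare it against: the paper states this proposition as a cited result of Fagin (\cite{Fagin:1977}) and gives no proof at all, so your argument actually supplies what the paper omits. Your route is the natural one and every step checks out: the inclusion $X \subseteq (X\restrictto\bar x\bar y)\bowtie(X\restrictto\bar x\bar z)$ is indeed unconditional, so the whole content sits in the reverse inclusion; unwinding the join shows its members are exactly the recombinations of pairs $t,t'\in X$ agreeing on $\bar x$; and the reverse inclusion then coincides verbatim with the first-order characterization in Proposition \ref{truthcondition}, which the paper has already established independently, so invoking it is legitimate. You are also right to flag the crux: since an assignment on $\dom(X)$ is determined by its restrictions to $\bar x\bar y$ and to $\bar z$, the existential ``there is $s_0\in X$'' in Proposition \ref{truthcondition} collapses to plain membership of the unique recombination, which is what makes the two conditions identical rather than merely mutually implying. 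One small imprecision worth noting: the paper's definition of $\mvd{\bar x}{\bar y}$ does not require $\bar x$ and $\bar y$ to be disjoint (only $\bar z=\dom(X)\setminus\set{\bar x,\bar y}$ is guaranteed disjoint from both), so your opening remark about ``three disjoint blocks'' is slightly stronger than what is given; but what your argument actually uses --- that $\bar x\bar y$ and $\bar x\bar z$ jointly cover $\dom(X)$ and overlap exactly in $\bar x$, where $t$ and $t'$ agree --- holds in all cases, so the proof is unaffected.
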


Observe that it follows that  $M ,X \models  \mvd{}{\bar y}$ iff there are teams $Y$ and $Z$ such that
$$\dom(Y)=\set{\bar y}, \dom(Z)=\dom(X) \setminus \set{\bar x,\bar y}, \text{ and } X=Y \bowtie Z.$$ In this case, when $\dom(Y)$ and $\dom(Z)$ are disjoint, the natural join of $Y$ and $Z$ is nothing more than the cartesian product.

Next we prove that the functional dependence may be replaced be multivalued dependence in a certain well-behaved syntactical fragment of Dependence logic. This fragment is as expressive as full Dependence logic at the level of sentences.

\begin{prop}
Let $Q$ be monotone of type $\langle 1 \rangle$ and $\sigma$ a sentence in $\mathrm{SBL}(Q)$ with no slashed quantifiers, 
then the resulting sentence $\sigma^\twoheadrightarrow$ in which all occurrences of 
$Q y \backslash \bar x\ \varphi$ are replaced by
$Q y (\mvd{\bar x}{y} \land \varphi)$ is equivalent to $\sigma$.
\end{prop}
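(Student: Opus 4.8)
The plan is to prove the equivalence by induction on the structure of $\sigma$, with inductive hypothesis that $M,X\models\psi$ iff $M,X\models\psi^\twoheadrightarrow$ for every subformula $\psi$ of $\sigma$ (with fresh bound variables) and every team $X$ with $\dom(X)\supseteq\FV(\psi)$; applying this to $\psi=\sigma$ and $X=\set{\epsilon}$ gives the proposition. The Boolean cases and the plain (unslashed) quantifiers recurse trivially, since there $(\cdot)^\twoheadrightarrow$ merely descends into the immediate subformulas and the two semantic clauses are literally identical. All the content therefore sits in the backslashed-quantifier step, for which I would isolate a single lemma about the multivalued atom.

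\emph{Key lemma.} Let $R$ be one of the local quantifiers $\exists_M$, $\forall_M$ or $Q_M$, let $\bar x\subseteq\dom(X)$, $y\notin\dom(X)$, and $F:X\to R$. If $\emptyset\notin R$, then $M,X[F/y]\models\mvd{\bar x}{y}$ iff $F$ factors through $\bar x$, i.e. there is $G:X\restrictto\bar x\to R$ with $F(s)=G(s\restrictto\bar x)$. To prove it I would compute the two sets in the definition of $\mvd{\bar x}{y}$ inside $X[F/y]$. Writing $\bar z=\dom(X)\setminus\bar x$, for $s=s'[a/y]\in X[F/y]$ agreement on all of $\dom(X)$ pins down $s'$, so $\bigl(X[F/y]\bigr)^y_{s\restrictto\bar x\bar z}=F(s')$, whereas $\bigl(X[F/y]\bigr)^y_{s\restrictto\bar x}=\bigcup\set{F(s'') | s''\in X,\ s''\restrictto\bar x=s'\restrictto\bar x}$. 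Since $\emptyset\notin R$, every $s'\in X$ occurs as a reduct of some member of $X[F/y]$, so $\mvd{\bar x}{y}$ holds exactly when $F(s')=\bigcup_{s''\sim_{\bar x}s'}F(s'')$ for all $s'$; this forces $F$ to be constant on each $\bar x$-class (the factoring condition), and conversely constancy makes the union equal to $F(s')$. With the lemma in hand the backslashed step is immediate: via Definition \ref{qdef} and the lemma, both $M,X\models Qy\backslash\bar x\,\chi$ and $M,X\models Qy(\mvd{\bar x}{y}\land\chi^\twoheadrightarrow)$ unwind to the very same statement — existence of an $F:X\to Q$ factoring through $\bar x$ with $M,X[F/y]\models\chi$ (resp. $\chi^\twoheadrightarrow$) — after which the induction hypothesis identifies $\chi$ with $\chi^\twoheadrightarrow$ on $X[F/y]$. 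The identical computation covers the backslashed $\exists$ and $\forall$, because $\emptyset\notin\exists_M$ and $\forall_M=\set{M}$; note in particular that $X[M/y]\models\mvd{\bar x}{y}$ always holds, which is exactly why the multivalued atom repairs the defect $M\nmodels\forall x(\fd{}{x}\land\varphi)$ of the functional atom, since $\forall x\backslash\epsilon$ and $\forall x(\fd{}{x}\land\cdot)$ are not equivalent while $\forall x\backslash\epsilon$ and $\forall x(\mvd{}{x}\land\cdot)$ are.

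The one genuinely delicate point — the step I expect to be the main obstacle — is the handling of the empty set. When $\emptyset\in R$ the computation breaks: an $s'$ with $F(s')=\emptyset$ contributes no member to $X[F/y]$, so it is never tested, $\mvd{\bar x}{y}$ no longer forces true constancy, and one gets only $X[F/y]\subseteq X[G/y]$ rather than equality, at which point downward closure (Proposition \ref{prop:subteams}) runs the wrong way. For a monotone $Q$, however, $\emptyset\in Q$ forces $Q=\power(M)$, and then both $Qy\backslash\bar x\,\chi$ and $Qy(\mvd{\bar x}{y}\land\chi)$ are satisfied by every team, witnessed by the constant-$\emptyset$ function for which $X[F/y]=\emptyset$; so the equivalence holds trivially, and the same remark disposes of the degenerate $M=\emptyset$ case for $\exists$ and $\forall$. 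A secondary subtlety worth recording in the write-up is why forbidding slashed quantifiers is essential: the reduct $\bar z=\dom(X)\setminus\bar x$ over which the definition of $\mvd{\bar x}{y}$ quantifies is precisely ``all in-scope variables other than $\bar x$'', which is exactly what lets $\mvd{\bar x}{y}$ track the backslash constraint ``depends only on $\bar x$''; a slashed $Qy/\bar x$ would instead require dependence on $\dom(X)\setminus\bar x$, a context-varying set that no single multivalued atom with a fixed argument can name.
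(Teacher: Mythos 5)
Your proof is correct and takes essentially the same route as the paper's: induction on the formula with the backslashed quantifier as the only nontrivial case, settled by computing that in $X[F/y]$ the set of possible $y$-values given all of $\dom(X)$ is $F(s)$ while given only $\bar x$ it is the union of $F$ over the $\bar x$-class, so that $\mvd{\bar x}{y}$ holds exactly when $F$ factors through $\bar x$. Your separate treatment of the case $\emptyset \in Q$ (where monotonicity forces $Q = \power(M)$ and both sides become trivially true via the constant-$\emptyset$ witness) is a detail the paper's terse proof glosses over—its computation tacitly assumes every $F(s)$ is nonempty—but this is a refinement of, not a departure from, the same argument.
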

\begin{proof}
We prove the more general statement that for every formula $\psi$ of  $\mathrm{SBL}(Q)$ with no slashed quantifiers, 
the resulting formula $\psi^\twoheadrightarrow$ in which all occurrences of 
$Q y \backslash \bar x\ \varphi$ are replaced by
$Q y (\mvd{\bar x}{y} \land \varphi)$ is equivalent to $\psi$. The proof is by induction on the formula $\psi$. The only non trivial case is when $\psi$ is $Qy
\backslash \bar x \ \varphi$. Then $M ,X \models  \psi$
iff there is $F: X \to Q$ such that $M,X[F/y] \models \varphi$ and
$F(s)$ is determined by the values $s(\bar x)$.
\begin{multline*}
M ,X \models  Qy (\mvd{\bar x}{y} \land \varphi^\twoheadrightarrow)  \text{ iff  } \exists F:
X \to Q\text{ s.t. } M ,X[F/y] \models  \varphi^\twoheadrightarrow, \\ 
\text{ and } M
,X[F/y] \models  \mvd{\bar x}{ y}.  
\end{multline*}
Now $M ,X[F/y] \models  \mvd{\bar x}{ y}$ iff $X[F/y]^y_{s\restrictto \bar x} = X[F/y]^y_{s\restrictto \bar x\bar z}$ for every $s \in X$.
 However 
$$X[F/y]^y_{s\restrictto \bar x\bar z}=F(s),$$ so the result follows from the induction hypothesis.
\end{proof}

It should be clear that if for all $s\neq s' \in X$ there is  $x \in \dom(X) \setminus \set{y}$ 
such that 
$s(x) \neq s'(x)$, i.e., $X(\bar x,y)$ is (the graph of) a partial function $M^k \to M$, then 
$$M ,X \models  \fd{\bar x}{y}
 \text{ iff } M ,X \models  \mvd{\bar x}{y}.$$
Thus, if $y$ is existentially quantified in a sentence of Dependence logic $\sigma$ then
the resulting team $X$ can be assumed to have this property and thus $\fd{\bar x}y$ and
$\mvd{\bar x}{y}$ are interchangeable in the following restricted way:

\begin{defin}
A Dependence logic formula $\varphi$ is \emph{normal} if $\fd{\bar x}{y}$ only occurs as 
$\exists y (\fd{\bar x}{y} \land \psi)$.
\end{defin}

\begin{prop}
If $\varphi$ is normal and $\varphi'$ is the result of replacing atoms $\fd{\bar x}{y}$ by $\mvd{\bar x}{y}$ in $\varphi$, 
then for every $M$ and $X$ 
$$ M ,X \models  \varphi\  \text{ iff } \  M ,X \models  \varphi'.$$ 
\end{prop}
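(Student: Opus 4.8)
The plan is to prove, by induction on the structure of $\varphi$, the equivalence $M,X\models\varphi$ iff $M,X\models\varphi'$ simultaneously for all models $M$ and all teams $X$ with $\dom(X)\supseteq\FV(\varphi)$. The reason the hypothesis of \emph{normality} is needed is that $\fd{\bar x}{y}$ and $\mvd{\bar x}{y}$ are genuinely inequivalent on arbitrary teams, so a naive replacement of the atom cannot work on its own; normality guarantees that the functional dependence atom never occurs in isolation but only as the left conjunct of the composite $\exists y(\fd{\bar x}{y}\land\psi)$. The whole point is therefore to keep this composite together throughout the induction and to \emph{never} invoke a standalone equivalence for the atom.

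First I would record the two facts that make the argument run. The first is the observation already stated in the text just before the definition of normality: if a team $Y$ has the property that any two distinct assignments in $Y$ differ on some variable other than $y$ — equivalently, $Y(\bar x,y)$ is the graph of a partial function determining $y$ from the remaining variables — then $M,Y\models\fd{\bar x}{y}$ iff $M,Y\models\mvd{\bar x}{y}$. The second is that \emph{every} team of the form $X[f/y]$ with $f\colon X\to M$ and $y\notin\dom(X)$ has exactly this property: two distinct assignments $s[f(s)/y]$ and $t[f(t)/y]$ with $s\neq t$ in $X$ must already differ on $\dom(X)=\dom(X[f/y])\setminus\{y\}$, precisely because $y\notin\dom(X)$. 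Here I would invoke the standard convention that bound variables are fresh, so that $y\notin\dom(X)$ at the point where $\exists y$ is evaluated; this freshness is genuinely needed, since the functional property — and with it the atom-level equivalence — fails if $y$ already lies in the domain.

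With these two facts the induction is routine except at the special pattern. For first-order literals $\varphi'=\varphi$ and there is nothing to prove; for $\lor$, for an ordinary conjunction $\psi_1\land\psi_2$, and for $\exists y\,\chi$ or $\forall y\,\chi$ in which $\chi$ is \emph{not} of the special form, each immediate subformula is again normal and is not a bare dependence atom (again by normality), so the induction hypothesis together with the corresponding truth condition delivers the claim. The only substantive case is $\varphi=\exists y(\fd{\bar x}{y}\land\psi)$: here $M,X\models\varphi$ iff there is $f\colon X\to M$ with $M,X[f/y]\models\fd{\bar x}{y}$ and $M,X[f/y]\models\psi$. By the observation applied to the team $X[f/y]$ the first conjunct is equivalent to $M,X[f/y]\models\mvd{\bar x}{y}$, and by the induction hypothesis applied to the normal subformula $\psi$ on the team $X[f/y]$ the second is equivalent to $M,X[f/y]\models\psi'$. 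Recombining through the truth conditions for $\land$ and $\exists y$ yields $M,X\models\exists y(\mvd{\bar x}{y}\land\psi')$, which is exactly $M,X\models\varphi'$. The main obstacle is thus not computational but organizational: one must set the induction up so that the composite $\exists y(\fd{\bar x}{y}\land\psi)$ is processed as a single step and the atom $\fd{\bar x}{y}$ is never met in isolation — which is precisely what normality secures — while the only real content, the coincidence of the two atoms on $X[f/y]$, rests on the freshness of the quantified variable $y$.
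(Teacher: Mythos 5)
Your proof is correct and is essentially the paper's own argument: the paper dismisses this proposition with ``Easy induction,'' relying on exactly the two ingredients you isolate --- keeping the pattern $\exists y(\fd{\bar x}{y} \land \psi)$ together as a single induction step, and using the fact that on teams of the form $X[f/y]$ (with $y$ fresh) the functional-graph property makes $\fd{\bar x}{y}$ and $\mvd{\bar x}{y}$ coincide, which is the observation stated in the text immediately before the definition of normality. Your explicit appeal to the freshness of the quantified variable $y$ makes precise a convention the paper leaves implicit, so your write-up is if anything more careful than the original.
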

\begin{proof}
Easy induction. 
\end{proof}

This means that under restricted use of the dependence atom we can use
either $\fd{}{}$ or $\mvd{}{}$. Since every sentence of Dependence logic can be expressed by a sentence in DF-logic and those in turn can be expressed by a normal sentence of Dependence logic, we know that the fragment of normal sentences is as strong as full Dependence logic.

Let us call Dependence logic in which $\mvd{}{}$ is used instead of $\fd{}{}$ for \emph{Multivalued Dependence logic} or MVDL for short.

The truth definition of $M ,X \models  \mvd{\bar x}{y}$ is
first order in $X$, see Proposition \ref{truthcondition}, and thus for every formula $\varphi$ in MVDL there
is a sentence $\sigma(R)$  in ESO such that 
$$ M ,X \models  \varphi \text{ iff } (M,X) \models \sigma(R).$$
That means that MVDL is at most as strong as existential second order logic (when it comes to
sentences) and thus as Dependence logic. Also, by translating sentences of Dependence logic into normal sentences and then replacing the functional dependence atom with the multivalued dependence atom we get a sentence of MVDL which is equivalent to the original Dependence logic sentence. Thus, MVDL, Dependence logic and ESO are all of the same strength on the level of sentences.

Recently Galliani proved that MVDL is exactly as strong as existential second order logic also on the level of formulas:
\begin{prop}[\cite{Galliani:2011}]
Let $\X$ be a set of teams on a model $M$, then the following are equivalent:
\begin{itemize}
\item There is a formula $\varphi$ of MVDL such that $\X = \sem{\varphi}^M$.
\item There is a sentence of existential second order logic, ESO, $\sigma$ such that $X \in \X$ iff
$(M,X(\bar x)) \models \sigma$.
\end{itemize}
\end{prop}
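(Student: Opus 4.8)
The plan is to prove the two directions separately. The direction from MVDL to ESO is the routine one: I would show by induction on the structure of an MVDL formula $\varphi$ that the relation $M,X \models \varphi$ is definable by an ESO sentence over the signature of $M$ expanded with one relation symbol $R$ of arity $|\FV(\varphi)|$ interpreted as the coded team $X(\bar x)$. The literals $R(\bar t)$ and $\lnot R(\bar t)$ translate to first-order statements $\forall s \in R\,(\ldots)$, and the multivalued dependence atom $\mvd{\bar x}{y}$ is already first-order in $X$ by Proposition \ref{truthcondition}; conjunction is handled by conjoining suitably reindexed translations, disjunction by an existential second-order quantifier guessing the relation that codes the split $X = Y \cup Z$, the existential quantifier $\exists y$ by a second-order quantifier guessing the relation coding the extended team $X[f/y]$, and $\forall y$ by the first-order duplication $X[M/y]$. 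Since each step only prepends existential second-order quantifiers in front of a first-order matrix, the translation stays within ESO, giving that the first bullet implies the second.

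For the converse I would start from an ESO sentence $\sigma$ over the expanded signature containing the team relation $T$ of arity $k$ and construct an MVDL formula $\varphi$ with $\FV(\varphi)=\bar x$ such that $\sem{\varphi}=\set{X | (M,X(\bar x))\models \sigma}$. The decisive structural fact to exploit is that, unlike Dependence logic, MVDL is \emph{not} closed under subteams — its atom $\mvd{\bar x}{\bar y}$ can force a team to be a full natural join (Proposition \ref{fagin77}) — so it is capable of pinning down a set of teams \emph{exactly}, not merely up to downward closure. I would first put $\sigma$ in Skolem normal form $\exists \bar f\,\forall \bar u\,\theta$ with $\theta$ quantifier-free, and then build $\varphi$ that universally quantifies fresh variables $\bar u$ forced to range independently over all of $M$ (using atoms of the form $\mvd{}{\bar u}$, as in the cartesian-product remark following Proposition \ref{fagin77}), existentially quantifies Skolem-witness variables whose values depend functionally on the correct arguments (handled exactly as backslashed quantifiers are turned into multivalued atoms on existentially quantified positions via the normal-form propositions), and finally asserts a team-semantic rendering of $\theta$.

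The heart of the matter — and the step I expect to be the main obstacle — is expressing, inside the quantifier-free matrix, the \emph{membership test} ``$\bar a \in X(\bar x)$'' for the universally quantified tuple $\bar a$: the team only carries the tuples that actually lie in $T$, so testing an arbitrary $\bar u$ against it requires first expanding the team, via the multivalued dependence atoms, into a product that simultaneously records the original $\bar x$-column and ranges over all candidate tuples, and then using a flag — again controlled by a dependence atom, not by a downward-closed device — to mark whether the candidate coincides with a genuine member. Each atomic subformula $T(\bar t)$ of $\theta$ is then replaced by this membership test, negated occurrences being absorbed using that $\theta$ is quantifier-free.

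Finally I would verify both inclusions of $\sem{\varphi}=\set{X | (M,X(\bar x))\models \sigma}$. The soundness inclusion reads off the Skolem functions from the existential witnesses. The completeness inclusion builds the witnessing functions from the given Skolem functions; here one must check that the multivalued dependence atoms are genuinely satisfied, i.e. that the constructed team really is the prescribed product, and — crucially — that no proper subteam or superteam slips in. This last point is exactly where closure under subteams would have doomed Dependence logic, and where the non-monotone behaviour of $\mvd{}{}$ does the essential work; getting the largeness of the product right against all competing subteams is the delicate verification.
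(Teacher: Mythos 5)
Your first direction (MVDL to ESO) is fine, and it is the routine argument the paper itself sketches just before the proposition. Note, however, that the paper does not prove this proposition at all: it is quoted from Galliani, and only the easy direction is discussed in the surrounding text; so your attempt has to be measured against Galliani's own proof of the converse.

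For the hard direction (ESO to MVDL) there is a genuine gap, and it sits exactly where you say you expect ``the main obstacle'': the membership test. Your skeleton — Skolemize $\sigma$, universally quantify $\bar u$, existentially quantify Skolem values tied down by dependence constraints — is indeed the right one, but the step where each atom $T(\bar t)$ of the matrix is replaced by ``a flag controlled by a dependence atom'' is not a construction; it is a restatement of the problem. To make a flag $w$ truthful you must enforce two \emph{horizontal} conditions across the team: on rows where $w$ marks membership, that $s(\bar t)$ occurs as $s'(\bar x)$ for \emph{some other row} $s'$ of the team (an inclusion dependency $\bar t \subseteq \bar x$), and on rows where $w$ marks non-membership, that $s(\bar t)$ differs from $s'(\bar x)$ for \emph{every} row $s'$ (an exclusion dependency). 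Neither is expressed by a multivalued dependence atom as such: by Proposition \ref{fagin77}, $\mvd{\bar x}{\bar y}$ constrains the team to be a natural join of its projections; it never compares the values of one tuple of terms against the column of values of another tuple. In Galliani's proof this is precisely the content of separate, nontrivial lemmas: inclusion atoms and exclusion atoms are definable in Independence logic — equivalently, by the interdefinability noted in Section \ref{completeness}, in MVDL — where, e.g., $\bar t \subseteq \bar x$ is obtained by a trick involving two auxiliary universally quantified variables and an \emph{unconditional} independence atom, and negative occurrences of $T$ are handled by exclusion atoms rather than being ``absorbed'' (team semantics has no classical negation to absorb them with). Without these lemmas, or some substitute for them, your induction cannot be carried out, so the proposal does not establish the converse. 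A smaller point you also gloss over (as does the paper's statement): every MVDL formula is satisfied by the empty team, so the stated equivalence can only hold if $(M,\emptyset)\models\sigma$; Galliani's theorem is accordingly formulated with a restriction to nonempty teams.
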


Remember that $X(\bar x)$ is the relation corresponding to the team $X$. 

\subsection{Multivalued dependence, independence and completeness}\label{completeness}

In a recent paper by  Gr\"adel and V\"a\"an\"anen  \cite{Gradel:2011}  \emph{independence atoms} are introduced:
\begin{multline*} M ,X \models  \bar y \perp_{\bar x} \bar z \text{ iff } \\
\forall s,s' \mathord\in X \Bigl( s(\bar x)= s'(\bar x)  \imp 
 \exists s_0 \mathord\in X\bigl(s_0(\bar x,\bar y)=s(\bar x,\bar y ) \land s_0(\bar z) = s'(\bar z) \bigr)
\Bigr).
\end{multline*}
This atom also applies to terms $\bar t \perp_{\bar s} \bar t'$ by a slight change of the definition. 

As easily seen, we have
$$
M ,X \models  \mvd{\bar x}{\bar y} \text{ iff } M ,X \models  \bar y \perp_{\bar x} \bar z
$$
where $\bar z = \dom(X) \setminus \set{\bar x,\bar y}$. The logic we get when adding independence atoms to first order logic is called \emph{Independence logic}.

The independence relation introduced by Gr\"adel and V\"a\"an\"anen is in the database theory community known as the \emph{embedded multivalued depencency}. It is usually denoted by $\mvd{\bar x}{\bar y | \bar z}$.

Let us use the notation $D \models \varphi$, where $D$ is a (finite) set of dependence atoms (functional, multivalued or embedded multivalued) and $\varphi$ is a single dependence atom (of the same kind) to mean that any team $X$ (over any domain) satisfying all the dependencies in $D$ also satisfies $\varphi$. It is well known that functional dependence is axiomatizable:

\begin{prop}[\cite{armstrong:1974}]
If $D \cup \set{\varphi}$ is a finite set of functional dependence atoms then
$D \models \varphi$ iff $\varphi$ is derivable from $D$ with the following inference rules:
\begin{itemize}
\item Reflexivity: If $\bar y \subseteq \bar x$ then $\fd{\bar x}{\bar y}$.
\item Augmentation: If $\fd{\bar x}{\bar y}$ then $\fd{\bar x,\bar z}{\bar y,\bar z}$.
\item Transitivity: If  $\fd{\bar x}{\bar y}$ and  $\fd{\bar y}{\bar z}$ then  $\fd{\bar x}{\bar z}$.
\end{itemize}
\end{prop}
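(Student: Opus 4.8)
The statement packages soundness (derivability implies entailment) together with completeness (entailment implies derivability), and I would treat the two directions separately. Soundness is the routine half: I would verify that each of the three rules preserves semantic entailment, arguing directly from the truth condition $M,X \models \fd{\bar x}{\bar y}$ iff $s(\bar x)=s'(\bar x) \imp s(\bar y)=s'(\bar y)$ for all $s,s' \in X$. Reflexivity holds because when $\bar y \subseteq \bar x$, agreement on $\bar x$ already forces agreement on $\bar y$; augmentation and transitivity are equally immediate pointwise checks on pairs $s,s'$. A trivial induction on derivation length then shows every derivable atom is entailed by $D$.

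For completeness I would argue by contraposition: assuming $\fd{\bar x}{\bar y}$ is \emph{not} derivable from $D$, I would produce a single team satisfying all of $D$ yet refuting $\fd{\bar x}{\bar y}$. The organizing device is the closure $\bar x^+$, defined as the set of variables $v$ (ranging over the finite set $V$ of all variables occurring in $D$ and in $\fd{\bar x}{\bar y}$) for which $\fd{\bar x}{v}$ is derivable from $D$. First I would establish as a derived rule the union rule, that $\fd{\bar x}{\bar y}$ and $\fd{\bar x}{\bar z}$ yield $\fd{\bar x}{\bar y,\bar z}$ (augment each premise and compose by transitivity), and deduce from it the key lemma that $\fd{\bar x}{\bar w}$ is derivable exactly when $\bar w \subseteq \bar x^+$. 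The non-derivability hypothesis then reads $\bar y \not\subseteq \bar x^+$.

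Next I would take $M = \set{0,1}$ and the two-element team $X=\set{s,s'}$ on domain $V$, where $s$ sends every variable to $0$, while $s'$ sends the variables in $\bar x^+$ to $0$ and all others to $1$; thus $s$ and $s'$ agree precisely on $\bar x^+$. To see $M,X \models \fd{\bar u}{\bar w}$ for each $\fd{\bar u}{\bar w}\in D$, note that the antecedent $s(\bar u)=s'(\bar u)$ can hold only when $\bar u \subseteq \bar x^+$; but then $\fd{\bar x}{\bar u}$ is derivable, so transitivity with $\fd{\bar u}{\bar w}\in D$ gives $\fd{\bar x}{\bar w}$, whence $\bar w \subseteq \bar x^+$ and $s(\bar w)=s'(\bar w)$. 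Hence $M,X \models D$. On the other hand $\bar x \subseteq \bar x^+$ gives $s(\bar x)=s'(\bar x)$, while some component of $\bar y$ lies outside $\bar x^+$ and is therefore separated by $s$ and $s'$, so $M,X \nmodels \fd{\bar x}{\bar y}$. This witnesses $D \not\models \fd{\bar x}{\bar y}$ and closes the contrapositive.

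The main obstacle lives entirely in the completeness direction, and it is conceptual rather than computational: arranging the closure lemma and the counterexample so that a single pair of rows simultaneously validates every dependency in $D$. The delicate step is verifying $M,X \models D$, where one must see that the only dependencies whose antecedent is ``activated'' by the two-row team are those whose left side sits inside $\bar x^+$, and that the rule set is exactly strong enough---through transitivity---to push their right sides back into $\bar x^+$. Everything else, namely the derived union rule, the closure lemma, and the soundness checks, is routine.
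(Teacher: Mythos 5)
The paper offers no proof of this proposition at all: it is imported as a known result, credited to Armstrong (1974), and used only as background for the discussion of axiomatizing multivalued and embedded multivalued dependencies. So the comparison is with the literature rather than with anything internal to the paper, and your proof is precisely the classical argument: soundness by a pointwise check of the three rules against the truth condition $\forall s,s' \in X\, (s(\bar x)=s'(\bar x) \imp s(\bar y)=s'(\bar y))$, and completeness by contraposition through the closure $\bar x^{+}$ together with the two-assignment team over $\set{0,1}$ whose rows agree exactly on $\bar x^{+}$. The argument is correct, and it applies verbatim in the paper's setting, since a team is literally a relation and the paper's footnoted notion of $D \models \varphi$ (every team over every domain containing the relevant variables) is refuted by your single counterexample team. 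The crux, as you say, is the verification that the two-row team satisfies each $\fd{\bar u}{\bar w} \in D$: the antecedent is activated only when $\bar u \subseteq \bar x^{+}$, and then transitivity composed with the closure lemma pushes $\bar w$ back into $\bar x^{+}$; you have this right. Two small points you should make explicit: (i) the half of the closure lemma asserting that derivability of $\fd{\bar x}{\bar w}$ forces $\bar w \subseteq \bar x^{+}$ does not follow from the union rule but from the decomposition rule, obtained by reflexivity ($\fd{\bar w}{v}$ for $v \in \bar w$) followed by transitivity; (ii) the derivation of the union rule uses steps like augmenting $\fd{\bar x}{\bar y}$ by $\bar x$ to get $\fd{\bar x}{\bar x,\bar y}$, which tacitly treats the variable lists as sets---the standard convention, consistent with the paper's usage, but worth stating. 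Neither affects correctness.
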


A complete axiomatization of multivalued dependence is also possible as was shown by Beeri, Fagin and Howard:

\begin{prop}[\cite{Beeri:1977}]
Let $U$ be a finite set of variables, $D \cup \set{\varphi}$  a finite set of multivalued dependence atoms over the variables in $U$. Then $D \models \varphi$ iff $\varphi$ is derivable from $D$ with the following inference rules:
\begin{itemize}
\item Complementation: If $\bar x \cup \bar y \cup \bar z = U$, $\bar y \cap \bar z \subseteq \bar x$, and $\mvd{\bar x}{\bar y}$ then $\mvd{\bar x}{\bar z}$
\item Reflexivity: If $\bar y \subseteq \bar x$ then $\mvd{\bar x}{\bar y}$.
\item Augmentation: If $\mvd{\bar x}{\bar y}$ then $\mvd{\bar x,\bar z}{\bar y,\bar z}$.
\item Transitivity:  If  $\mvd{\bar x}{\bar y}$ and  $\mvd{\bar y}{\bar z}$ then  $\mvd{\bar x}{\bar z\setminus \bar y}$.\footnote{Here $\bar z \setminus \bar y$ is the set difference, i.e., the set of all variables in $\bar z$ not in $\bar y$.}
\end{itemize}
We are assuming that all $\bar x$, $\bar y$, and $\bar z$ are variables in $U$.
\end{prop}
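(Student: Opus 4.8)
The statement is a soundness-and-completeness theorem, so I would split it into the two implications. The right-to-left direction (derivability implies entailment) is soundness, and I would prove it by checking that each of the four rules preserves the first-order truth condition of Proposition \ref{truthcondition}. Reflexivity and augmentation are direct unwindings of the definition. Complementation is essentially the symmetry built into the atom: since $\mvd{\bar x}{\bar y}$ demands, for every $s,s'\in X$ agreeing on $\bar x$, a witness $s_0$ copying $\bar x\bar y$ from $s$ and the complement $\bar z = U\setminus(\bar x\bar y)$ from $s'$, exchanging the roles of $s$ and $s'$ turns that witness into one for $\mvd{\bar x}{\bar z}$ whenever $\bar y$ and $\bar z$ partition $U\setminus\bar x$. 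Transitivity is the only rule requiring a genuine two-step argument: given witnesses for $\mvd{\bar x}{\bar y}$ and for $\mvd{\bar y}{\bar z}$, one composes the two swaps to produce, for any $s,s'$ agreeing on $\bar x$, a tuple of $X$ witnessing $\mvd{\bar x}{\bar z\setminus\bar y}$.

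The left-to-right direction, completeness, is the heart of the matter, and I would prove it by contraposition: assuming $\varphi=\mvd{\bar x}{\bar y}$ is not derivable from $D$, I exhibit a team over $U$ satisfying every atom of $D$ but not $\varphi$. The organizing tool is the dependency basis of the left-hand side $\bar x$. First I would derive, purely from the four given rules, the auxiliary union, intersection and difference rules for a fixed left-hand side; these show that $\set{\bar w \setminus \bar x : D \vdash \mvd{\bar x}{\bar w}}$ is closed under the Boolean set operations and is therefore exactly the collection of unions of a uniquely determined partition $W_1,\dots,W_k$ of $U\setminus\bar x$ into minimal nonempty derivable blocks. The key lemma is then that $D \vdash \mvd{\bar x}{\bar w}$ holds precisely when $\bar w\setminus\bar x$ is such a union of blocks. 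Non-derivability of $\varphi$ thus forces $\bar y$ to \emph{split} some block $W$, meaning $W\cap\bar y$ and $W\setminus\bar y$ are both nonempty.

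For the counterexample I would take $X$ to be the two-valued team over $U$ whose tuples are $0$ on $\bar x$ and constant (all $0$ or all $1$) on each block $W_i$, so $|X|=2^k$. The all-zero tuple together with the tuple that is $1$ exactly on the split block $W$ witnesses $X\nmodels\varphi$: their forced swap is $1$ on $W\cap\bar y$ and $0$ on $W\setminus\bar y$, hence non-constant on $W$ and absent from $X$. The main obstacle --- and the technical core of the whole theorem --- is verifying that $X$ satisfies every atom $\mvd{\bar u}{\bar v}$ of $D$. For this I would exploit that each such atom is itself derivable from $D$, analyse the dependency basis relative to $\bar u$, and argue that block-constant tuples are closed under the swaps that any derivable dependency can demand; the lossless-join reformulation of Proposition \ref{fagin77} gives a convenient bookkeeping device, since satisfaction of $\mvd{\bar u}{\bar v}$ amounts to a decomposition $X = (X\restrictto\bar u\bar v)\bowtie(X\restrictto\bar u\bar w)$. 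Reading the resulting team off through Proposition \ref{truthcondition} then yields the desired counterexample and closes the contrapositive.
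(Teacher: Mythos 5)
First, a point of reference: the paper itself gives \emph{no} proof of this proposition --- it is quoted from Beeri, Fagin and Howard \cite{Beeri:1977} --- so your proposal can only be compared with the classical argument. Your outline does follow that argument: soundness by checking each rule against the first-order condition of Proposition \ref{truthcondition} (your swap-exchange for complementation and swap-composition for transitivity are both correct), and completeness via the dependency basis, the block-constant team over $U$, and the observation that non-derivability forces $\bar y$ to split some block $W$, with the two indicated tuples then refuting $\varphi$. All of that is sound, including the derivation of the union, intersection and difference rules needed to make the basis well defined.

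The genuine gap sits exactly where you flag ``the main obstacle'': the claim that $X$ satisfies every atom $\mvd{\bar u}{\bar v}$ of $D$. Your plan --- ``block-constant tuples are closed under the swaps that any derivable dependency can demand'' --- restates the goal rather than proving it, and ``analyse the dependency basis relative to $\bar u$'' points the wrong way: what is needed is an interaction lemma between $\mvd{\bar u}{\bar v}$ and the basis of the \emph{fixed} left-hand side $\bar x$. Concretely, take $s,s'\in X$ agreeing on $\bar u$ and let $s_0$ be the forced swap. On any block $W$ with $W\cap\bar u\neq\emptyset$, block-constancy makes $s$ and $s'$ agree on all of $W$, so $s_0$ is unproblematic there. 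The crux is the blocks $W$ with $W\cap\bar u=\emptyset$, where one must show $W\cap\bar v=\emptyset$ or $W\subseteq\bar v$; this follows from the lemma: if $D\vdash\mvd{\bar u}{\bar v}$ and $W$ is a basis block with $W\cap\bar u=\emptyset$, then $D\vdash\mvd{\bar x}{W\cap\bar v}$, so that $W\cap\bar v$, being a derivable subset of the minimal block $W$, equals $\emptyset$ or $W$. Its derivation is the one clever step of the whole theorem: put $\bar c=U\setminus(W\cup\bar x)$; complementation applied to $D\vdash\mvd{\bar x}{W}$ gives $D\vdash\mvd{\bar x}{\bar c}$, and augmentation gives $D\vdash\mvd{\bar x}{\bar x\bar c}$; since $\bar u\subseteq\bar x\bar c$, augmenting $\mvd{\bar u}{\bar v}$ by $\bar x\bar c$ gives $D\vdash\mvd{\bar x\bar c}{\bar v\bar x\bar c}$; transitivity then yields $D\vdash\mvd{\bar x}{(\bar v\bar x\bar c)\setminus(\bar x\bar c)}$, and $(\bar v\cup\bar x\cup\bar c)\setminus(\bar x\cup\bar c)=\bar v\cap W$. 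The lossless-join reformulation of Proposition \ref{fagin77} is no substitute for this lemma; it only rephrases what satisfaction means. With the lemma your outline closes; without it the completeness half is not proved.
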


However the embedded multivalued dependency is not axiomatizable as was shown by Sagiv and Walecka  in the following sense:

\begin{prop}[\cite{Sagiv:1982}]
There is no finite set of inference rules, where each inference rule is a recursive set of $k$-tuples of embedded multivalued dependencies, axiomatizing the consequence relation $D \models \varphi$ for embedded multivalued dependencies.
\end{prop}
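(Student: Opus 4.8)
The plan is to argue by contradiction in the manner standard for non-finite-axiomatizability results about data dependencies. Suppose, toward a contradiction, that a finite sound and complete set $R$ of inference rules existed. Since $R$ is finite and each rule is a recursive set of $k$-tuples of embedded multivalued dependencies (EMVDs), taking the maximum over the finitely many rules yields a single natural number $k$ that bounds the arity of every rule, hence the number of premises any rule may consult in one application. The recursiveness of the rules will play no role; I will only use that each rule is \emph{sound} and has at most $k$ premises, which makes $R$ no stronger than the most permissive such system. The strategy is then to show that bounded arity cannot suffice: I would exhibit, for each $k$, a finite set $D$ of EMVDs and a single EMVD $\varphi$ such that $D \models \varphi$ yet $\varphi$ is not obtainable from $D$ by any sound rules of arity at most $k$, contradicting completeness.

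To make ``not obtainable by $\le k$-ary rules'' tractable, I would pass to the canonical such system: let $\mathrm{Cl}_k(D)$ be the closure of $D$ under the schematic rule ``from any $\sigma$ with $|\sigma| \le k$ and $\sigma \models \psi$, infer $\psi$,'' iterated to its least fixpoint. Every rule of $R$ is sound with at most $k$ premises, so anything $R$ derives from $D$ already lies in $\mathrm{Cl}_k(D)$; it therefore suffices to produce $D$ and $\varphi$ with $D \models \varphi$ but $\varphi \notin \mathrm{Cl}_k(D)$. Since $\mathrm{Cl}_k(D)$ is the \emph{least} $k$-saturated (i.e.\ closed under the schematic rule) superset of $D$, the condition $\varphi \notin \mathrm{Cl}_k(D)$ is equivalent to the existence of some $k$-saturated set $\mathcal S \supseteq D$ of EMVDs with $\varphi \notin \mathcal S$. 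Thus the whole problem reduces to building, for each $k$, a finite $D = D_k$, a target $\varphi = \varphi_k$ with $D_k \models \varphi_k$, and a $k$-saturated family $\mathcal S_k \supseteq D_k$ omitting $\varphi_k$, and then choosing $k$ larger than the arity bound extracted from $R$.

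The entire weight of the argument rests on this construction and on verifying its two halves. The \emph{global} validity $D_k \models \varphi_k$ I would establish by a chase argument: start from a generic few-row tableau witnessing a potential violation of $\varphi_k$, repeatedly apply the EMVDs of $D_k$ to equate and spawn rows, and check that the rows eventually forced to agree are exactly those demanded by $\varphi_k$. The genuine obstacle — and the real content of Sagiv and Walecka's theorem — is the \emph{underivability}, i.e.\ exhibiting the $k$-saturated $\mathcal S_k$ avoiding $\varphi_k$. The subtlety is that one must control not merely the one-step consequences of $D_k$ but every EMVD reachable through arbitrarily long chains of small inferences; a handful of \emph{derived} dependencies might conspire to imply $\varphi_k$ even when no $k$ members of $D_k$ do. Ruling this out requires designing the attributes and dependencies of $D_k$ — typically over a set of attributes that grows with $k$ and encodes a cyclic or incidence-type pattern — so that the implication is genuinely global (all $n$ pieces must act together, with $n$ chosen above $k$), while the closure under $\le k$-ary sound inference provably never crosses the threshold to $\varphi_k$. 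Carrying out this combinatorial design and the accompanying saturation analysis uniformly in $k$ is the crux of the proof.
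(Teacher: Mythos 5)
Your scaffolding is correct and is indeed the standard frame for results of this kind: from a finite sound and complete system one extracts a uniform bound $k$ on the number of premises a rule may consult, one observes that every individual rule instance must be semantically sound (apply the rule to the set consisting of its own premises and invoke soundness of the system), and one concludes that everything derivable from $D$ lies in the closure $\mathrm{Cl}_k(D)$ under the scheme ``from $\sigma$ with $|\sigma|\le k$ and $\sigma\models\psi$, infer $\psi$''; the problem then reduces, as you say, to exhibiting for each $k$ a finite $D_k$ and a target $\varphi_k$ with $D_k\models\varphi_k$ together with a $k$-saturated family $\mathcal S_k\supseteq D_k$ omitting $\varphi_k$. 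Note that the paper itself offers no proof to compare against --- the proposition is imported verbatim from Sagiv and Walecka --- so your attempt has to stand on its own.

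It does not, because you stop exactly where the theorem begins. The construction of $D_k$, $\varphi_k$ and $\mathcal S_k$ is not a detail that can be deferred: it is the entire mathematical content of the statement, and your text explicitly labels it ``the crux'' without carrying any of it out. A proof that never produces these families has established only a reformulation of the claim, since their existence for every $k$ is precisely what is being asserted (this is where Sagiv and Walecka's subset-dependency encoding does the work). Two further cautions about the half you do sketch. First, the appeal to ``a chase argument'' for the validity half $D_k\models\varphi_k$ is not routine for \emph{embedded} multivalued dependencies: unlike full MVDs, the EMVD chase need not terminate, and the implication problem for EMVDs is not decidable by any such generic procedure, so validity must be verified by hand for the concrete family, not delegated to the chase. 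Second, verifying that $\mathcal S_k$ is $k$-saturated requires checking semantic entailment $\sigma\models\psi$ for all small subsets $\sigma$ of the family, which itself demands explicit countermodel constructions; nothing in the proposal indicates how either verification would go. In short: the reduction is right, but the load-bearing combinatorial construction is absent, so this is a plan for a proof rather than a proof.
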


This answers an open question stated in \cite{Gradel:2011}.

Galliani recently observed that $X \models \bar t \perp_{\bar s} \bar t'$ iff
$$X \models \exists \bar x \bar y \bar z \bigl( \bar x = \bar s \land \bar y = \bar t \land \bar z = \bar t' \land \forall \bar u \mvd{\bar x}{\bar y}\bigr),$$
where $\bar u$ is the domain of $X$. Thus, we get the following proposition.
\begin{prop}[\cite{Galliani:2011}]
The multivalued Dependence logic has the same strength as Independence logic, even at the level of formulas. 
\end{prop}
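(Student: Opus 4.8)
The plan is to establish mutual translatability between the two logics at the level of semantic values, using the two equivalences already recorded just above the statement. Since MVDL and Independence logic share all the first-order connectives and quantifiers, with identical Hodges truth conditions, and differ only in which atoms they admit, it suffices to translate each atom of one logic into a formula of the other so that team satisfaction is respected, and then to lift this to arbitrary formulas by a routine induction on syntactic complexity. Concretely, I would show that for every formula of one logic there is a formula of the other with the same $\sem{\cdot}_M$ for all $M$, which is exactly what ``same strength at the level of formulas'' demands.

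First I would treat the direction from MVDL into Independence logic. Given an MVDL formula, I would replace each occurrence of a multivalued dependence atom $\mvd{\bar x}{\bar y}$ by the independence atom $\bar y \perp_{\bar x} \bar z$, where $\bar z = \dom(X)\setminus\set{\bar x,\bar y}$ is the list of the remaining variables in scope at that occurrence. The point to verify is that $\bar z$ is well defined: at any syntactic position the domain of the relevant team is fixed by the free variables together with the quantifiers under whose scope the atom lies, so $\bar z$ can be read off the syntax. With this choice the equivalence $M,X\models\mvd{\bar x}{\bar y}$ iff $M,X\models \bar y\perp_{\bar x}\bar z$ noted above makes the base case immediate, and the inductive step is trivial because both logics interpret the connectives and quantifiers in the same way.

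For the converse I would invoke Galliani's observation verbatim, replacing each independence atom $\bar t\perp_{\bar s}\bar t'$ by the MVDL formula
$$\exists \bar x\bar y\bar z\bigl(\bar x=\bar s \land \bar y=\bar t \land \bar z=\bar t' \land \forall \bar u\, \mvd{\bar x}{\bar y}\bigr),$$
where $\bar u$ is the domain of the team at that point and $\bar x,\bar y,\bar z$ are fresh variables. Again $\bar u$ is syntactically determined, and the fresh variables can be chosen to avoid clashes, so that the translation is conservative over the original free variables and hence preserves $\sem{\cdot}_M$ on the variables that matter. The stated equivalence supplies the base case, and the induction closes as before.

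I expect the main obstacle to be bookkeeping rather than substance. Both the bare multivalued dependence atom and the embedded form expressing the independence atom are context dependent, so one must check that the context lists — the complement $\bar z$ in the first translation and the domain $\bar u$ in the second — are tracked correctly through the quantifiers, and that existentially quantifying the fresh $\bar x,\bar y,\bar z$ and universally re-quantifying over $\bar u$ leaves the semantic value on the free variables unchanged. Once these context lists are seen to be determined by the position of the atom, the two recorded equivalences close both inductions, and the classes of semantic values realized by MVDL formulas and by Independence logic formulas coincide.
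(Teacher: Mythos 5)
Your proposal is correct and takes essentially the same route as the paper, whose own argument is just the two equivalences you invoke — $\mvd{\bar x}{\bar y}$ rewritten as $\bar y \perp_{\bar x} \bar z$ with $\bar z$ the remaining domain variables, and Galliani's MVDL expression of $\bar t \perp_{\bar s} \bar t'$ — followed by an implicit occurrence-wise substitution. Your explicit attention to tracking the context lists $\bar z$ and $\bar u$ through the quantifiers is precisely the bookkeeping the paper leaves unstated.
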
 

Thus the definable sets of teams of both Independence logic and multivalued Dependence logic is exactly the sets of teams definable by existential second order sentences.

\section{Conclusion and discussion}\label{conclusion}

In this paper we have given truth conditions for monotone generalized 
quantifiers in logics using team semantics in such a way that the meaning of 
$L(Q)$-formulas remain the same when moving to team semantics, i.e., a team 
satisfies a formula of $L(Q)$ iff every assignment in the team satisfies the 
formula. It is also shown that the truth conditions in a natural way can be 
extended to deal with relations of dependence and independence between the 
quantifiers, in such a way that branching of two quantifiers $Q_1$ and $Q_2$ 
can be expressed by a linear quantifier prefix: $Q_1x Q_2 y \slash x$. 

We also gave truth conditions for non-monotone quantifiers by using an idea 
from Sher in \cite{Sher:1990} to add a largeness or maximality condition. For 
this condition the quantifier prefix $Q_1x Q_2 y \slash x$ comes close to the 
branching $\Br^S(Q_1,Q_2)xy$ defined in \cite{Sher:1990}, but they are not 
equivalent: In the prefix $Q_1x Q_2 y \slash x$ the second quantifier depends 
on the first in a weak sense, however in the case of $\Br^S(Q_1,Q_2)xy$ there 
is full symmetry in the sense that $\Br^S(Q_1,Q_2)xy$ is equivalent to 
$\Br^S(Q_2,Q_1) yx$.

Is there some way of treating the maximaility principle of Sher, 
$\Br^S(Q_1,Q_2)$, in a compositional way in the framework used in this paper?
Can other proposed principles, e.g. the one in \cite{Westerstahl:1987}, of 
branching in the non-monotone case be handled compositional in the same way?

The question of whether the notion of dependence and independence of 
(monotone) quantifiers can be handled on the atomic level is answered 
positively in the paper. However, the notion of dependence is not the 
functional dependence of Dependence logic, but rather a new kind of dependence 
atom, called multivalued dependence. This atom is not closed under taking 
subteams, but can be used to express branching of generalized quantifiers, 
which the functional dependence atom cannot: $\Br(Q_1,Q_2)xy$ is equivalent to 
$Q_1xQ_2y (\mvd{}{y} \land \ldots)$.

If a monotone quantifier $Q$ is definable in ESO, i.e., there is an ESO 
sentence $\sigma$ such that $M \models \sigma$ iff $M \in Q$, then it is easy 
to see that the strength on sentence level of the logic $\text{SBL}(Q)$ is 
just the strength of existential second order logic, ESO. This comes from the 
fact that $\text{SBL} \equiv \text{ESO}$ and by observing that for any formula 
$\varphi$ of $\text{SBL}(Q)$ we can find a sentence $\sigma$ of ESO such that 
$$M ,X \models   \varphi \text{ iff } (M,X(\bar x)) \models \sigma.$$ This is 
done by coding the truth conditions of $\varphi$ into the sentence $\sigma$.
Thus, $\SBL(Q) \equiv \SBL$ but the translation of $\SBL(Q)$ sentences into 
$\SBL$ is non-compositional.

For which quantifiers $Q$ are there compositional translations of $\SBL(Q)$ 
into SBL? In particular, is there a compositional translation of $\SBL(Q_0)$ 
into SBL, where $Q_0$ is the quantifier  ``there exists infinitely many''?

Of course we have not answered one of the basic questions regarding our 
definition of generalized quantifiers:
When introducing a monotone quantifier, which may not be definable in ESO, 
into Dependence logic, what is the strength of the resulting logic?

In connection with investigating the strength of these kinds of logics it 
might be worth mentioning Krynicki's result in \cite{Krynicki:1993} saying 
that there is a single quantifier $Q$ of type $\langle 4 \rangle$ such that 
every IF-logic sentence is equivalent to a sentence of $L(Q)$ over every 
structure with a pairing function. Is this also true for $\text{SBL}(Q)$, i.e., is 
there a single quantifier $Q'$ such that any sentence of $\text{SBL}(Q)$ is 
equivalent to a sentence of $L(Q')$ over any structure with a pairing 
function?

There is a connection of multivalued dependence with category theory through 
pullbacks, or fibered products. Proposition \ref{fagin77} gives a 
characterization of multivalued dependence in terms of natural join, which in 
turn has a characterization in terms of pullbacks:

In the category of teams, where the objects are teams and the morphisms are 
functions between teams, the natural join of $X$ and $Y$ is the pullback of 
$X$ and $Y$ over $Z=(X \restrictto \bar z) \cap (Y \restrictto \bar z)$, where 
$\bar z$ is $\dom(X) \cap \dom(Y)$. More precisely; let $\bar x$ and $\bar y$ 
be  $\dom(X)$ and $\dom(Y)$ respectively, then the following is a pullback 
diagram:
\begin{center}
\begin{tikzpicture}[node distance=4em, auto]
\node(P) {$X \bowtie Y$};
\node(X) [left of=P, below of=P]{$X$};
\node(Y) [right of=P, below of=P]{$Y$};
\node(Z) [left of=Y, below of=Y]{$Z$};
\draw[->] (P) to node [swap]{$\cdot\restrictto\bar x$} (X);
\draw[->] (P) to node {$\cdot\restrictto\bar y$} (Y);
\draw[->] (X) to node [swap]{$\cdot\restrictto\bar z$} (Z);
\draw[->] (Y) to node {$\cdot\restrictto\bar z$} (Z);
\end{tikzpicture}
\end{center}

Thus, by using Proposition \ref{fagin77}, we see that $X \models \mvd{\bar x}{\bar y}$ holds iff the commuting diagram
\begin{center}
\begin{tikzpicture}[node distance=4em, auto]
\node(P) {$X$};
\node(X) [left of=P, below of=P]{$X \restrictto \bar x \bar y$};
\node(Y) [right of=P, below of=P]{$X \restrictto \bar x \bar z$};
\node(Z) [left of=Y, below of=Y]{$X \restrictto \bar x$};
\draw[->] (P) to node {} (X);
\draw[->] (P) to node {} (Y);
\draw[->] (X) to node {} (Z);
\draw[->] (Y) to node {} (Z);
\end{tikzpicture}
\end{center}
where $\bar z$ is $\dom(X) \setminus \set{\bar x,\bar y}$, is a pullback. This suggests that there might be more, and deeper, connections between team semantics and category theory.

\end{document}